\newtheorem{theorem}{Theorem}[section]
\newtheorem{corollary}[theorem]{Corollary}
\newtheorem{lemma}[theorem]{Lemma}
\newtheorem{proposition}[theorem]{Proposition}
\theoremstyle{definition}
\newtheorem{definition}[theorem]{Definition}
\newtheorem{question}[theorem]{Question}
\newtheorem{example}[theorem]{Example}
\newtheorem{remark}[theorem]{Remark}
\newcommand*{\cdef}{\newcommand*}
\let \P \undefined
\cdef \A {\mathcal{A}}
\cdef \F {\mathcal{F}}
\cdef \P {\mathcal{P}}
\cdef \T {\mathcal{T}}
\cdef \bbn {\mathbb{N}}
\cdef \bbr {\mathbb{R}}
\cdef \eps {\varepsilon}
\cdef \ord {\operatorname{ord}} 
\cdef \CPM {\mathcal{CMM}} 
\cdef \card {\operatorname{card}} 
\cdef \dia {\operatorname{diam}} 
\cdef \HD {\operatorname{HD}} 
\cdef \Lip {\operatorname{Lip}} 
\cdef \Per {\operatorname{Per}} 
\cdef \Var {\operatorname{Var}} 
\cdef \htop {h_\textup{top}} 
\cdef \maps {\colon} 
\definecolor{gray}{rgb}{0.4, 0.4, 0.4}
\setlist{itemsep = 0pt}
\cdef \DescriptionFormat [1]{%
	\normalfont\emph{#1}%
}
\setlist[description]{format=\DescriptionFormat}
\begin{document}

\title{Constant slope, entropy and horseshoes \\ for a map on a tame graph}

\author{Adam Barto\v s\footnote{The author was supported by the grants GAUK 970217 and SVV-2017-260456 of Charles University.}\\
Charles University, 
Faculty of Mathematics and Physics,\\
Department of Mathematical Analysis\\
e-mail: \texttt{drekin@gmail.com}\\
\\
Jozef Bobok\\
Czech Technical University in Prague,\\  Faculty of Civil Engineering\\
\\
Pavel Pyrih\\
Charles University, 
Faculty of Mathematics and Physics,\\
Department of Mathematical Analysis\\
\\
Samuel Roth\\
Silesian University in Opava,\\ Mathematical Institute\\
\\
Benjamin Vejnar\footnote{The author was supported by the grant GA\v CR 17-04197Y and Charles University Research Centre program No.~UNCE/SCI/022.
}
\\
Charles University, 
Faculty of Mathematics and Physics,\\
Department of Mathematical Analysis
}

\date{May 3, 2018}

\maketitle

\begin{abstract}
	We study continuous countably (strictly) monotone maps defined on a tame graph, i.e., a special Peano continuum for which the set containing branchpoints and endpoints has a countable closure. In our investigation we confine ourselves to the countable Markov case. We show a necessary and sufficient condition under which a locally eventually onto, countably Markov map $f$ of a tame graph $G$ is conjugate to a constant slope map $g$ of a countably affine tame graph. In particular, we show that in the case of a Markov map $f$ that corresponds to recurrent transition matrix, the condition is satisfied for constant slope $e^{\htop(f)}$, where $\htop(f)$ is the topological entropy of $f$. Moreover, we show that in our class the topological entropy $\htop(f)$ is achievable through horseshoes of the map $f$.
	
	\begin{description}
		\item[Classification:]
			Primary
			37E25; 
			Secondary
			37B40, 
			37B45. 
		
		\item[Keywords:] countably Markov map, tame graph, constant slope, mixing, locally eventually onto, conjugacy, entropy, horseshoes.
	\end{description}
\end{abstract}

\bigskip

\tableofcontents
\bigskip

\linespread{1.2}\selectfont 

\section{Introduction}\label{sec:Intro}

In this paper we explore when a \emph{countably (strictly) monotone} map on a tame graph is conjugate to a map of constant slope.
We show a necessary and sufficient condition under which a countably Markov and locally eventually onto map $f$ of a tame graph $G$ is conjugate to a constant slope map $g$ of a countably affine tame graph. In particular, we show that in the case of a Markov map $f$ that corresponds to a recurrent transition matrix, the condition is satisfied for constant slope $e^{\htop(f)}$, where $\htop(f)$ is the topological entropy of $f$. We give also a partial solution to the constant slope problem when the locally eventually onto hypothesis is weakened to topological mixing. Moreover, we show that in this broader class of maps the topological entropy $\htop(f)$ is achievable through horseshoes of the map $f$.

Let us consider continuous maps $f\maps X \to X$, $g\maps Y\to Y$ and $\phi\maps X\to Y$, where
$X$, $Y$ are compact Hausdorff spaces, such that
\begin{equation}\label{e:41}\phi\circ f=g\circ \phi.\end{equation}
If $\phi$ is surjective/homeomorphism, we say that $f$ is \emph{semiconjugate}/\emph{conjugate} to $g$ via the map $\phi$. We call the map $\phi$ a \emph{semiconjugacy}/\emph{conjugacy}.

Recall that a \emph{finite graph} is a Peano continuum which can be written as the union of finitely many arcs any two of which are either disjoint or intersect only in one or both of their endpoints, and that a \emph{finite tree} is a finite graph not containing a simple closed curve.

Let $G$ be a finite graph. A continuous map $f\maps G\to G$ is said to be \emph{piecewise strictly monotone} if there is a finite subset $C_f \subset G$ such that for each connected component $P$ of $G \setminus C_f$, $f\vert_{P}$ is a homeomorphism of $P$ onto its image.

The classical results on conjugacy of a piecewise monotone interval map to a map of constant slope come from Parry \cite{Par66} and later on Milnor and Thurston \cite{MiThu88} (see also \cite{BoBru15}). A tree version concerning piecewise strictly monotone tree maps had been proved by Baillif and de Carvalho in \cite{BaCar01}. Recently, Alseda and Misiurewicz proved, among other results, its graph version:

\begin{theorem}\cite{AlMi15}
Let $f\maps G\to G$ be a piecewise monotone map of a finite graph $G$ with positive topological entropy $\htop(f)$. Then, there is a semiconjugacy $\phi$ between $f$ and a piecewise monotone graph map $g\maps G'\to G'$ with constant slope $e^{\htop(f)}$. The map $\phi$ is a homeomorphism if $f$ is transitive.
\end{theorem}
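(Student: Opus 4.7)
The plan is to follow the classical Parry--Milnor--Thurston strategy: construct a new length function on $G$ whose growth rate under $f$ is exactly $\lambda := e^{\htop(f)}$, then quotient out its null-length set to obtain the target graph $G'$. Since $f$ is piecewise monotone, the critical set $C_f$ is finite and $C_n := \bigcup_{i=0}^{n-1} f^{-i}(C_f)$ partitions $G$ into finitely many arcs on each of which $f^n$ is a homeomorphism onto its image. Positive entropy gives $\lambda > 1$, and a Misiurewicz--Szlenk-type result identifies $\htop(f)$ with the exponential growth rate of the lap number of $f^n$, equivalently with $\lim_n \frac{1}{n}\log \Var(f^n)$ in any fixed finite-edge metric on $G$.

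For each arc $A \subset G$ I set
\[
\mu(A) := \limsup_{n\to\infty} \lambda^{-n}\, \Var(f^n|_A).
\]
The first step is to verify that $\mu$ is finite, additive on arc decompositions along $\bigcup_n C_n$, and satisfies the functional equation $\mu(f(A)) = \lambda\,\mu(A)$ whenever $f|_A$ is injective. Finiteness uses the entropy--lap identification; additivity follows from the additivity of variation over concatenated monotone pieces; the functional equation is a direct index shift. Together these give a pseudo-length $\mu$ on $G$ that is $\lambda$-expanded by $f$ on every monotone branch.

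The second step is the quotient construction. Declare $x \sim y$ if $x,y$ lie on an arc of $\mu$-length zero, form $G' := G/{\sim}$ with the induced metric $d'$, and let $\phi\maps G \to G'$ be the quotient map. One has to check that $\sim$ is a closed equivalence relation whose nontrivial classes are finitely many subcontinua and that collapsing them preserves the finite-graph structure; then $g := \phi\circ f\circ\phi^{-1}$ is well-defined by the functional equation, piecewise monotone on $G'$, and satisfies $d'(g(u),g(v)) = \lambda\, d'(u,v)$ on every monotone branch, i.e.\ $g$ has constant slope $\lambda$. I expect this step to be the main obstacle: showing that $G'$ is actually a finite graph (not a degenerate continuum) and that $g$ inherits piecewise monotonicity requires careful control of the $\mu$-null set and uses the finiteness of $C_f$ in an essential way.

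For the final assertion, if $f$ is transitive then every nondegenerate arc $A$ satisfies $f^n(A) = G$ for some $n$ (a structural property of transitive piecewise monotone graph maps). Since $\mu(G) > 0$, applying the functional equation branch-by-branch forces $\mu(A) > 0$. Hence $\sim$ has only singleton classes, $\phi$ is a continuous bijection between compact metric spaces, and therefore a homeomorphism, giving a conjugacy.
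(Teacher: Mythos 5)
First, note that the paper you are working from does not prove this statement at all: it is quoted as background and attributed to Alsed\`a and Misiurewicz \cite{AlMi15}, so there is no internal proof to compare against; your proposal has to be judged on its own merits, and as written it has genuine gaps at its foundation. The central object $\mu(A)=\limsup_n \lambda^{-n}\Var(f^n|_A)$ does not have the two properties you need. (1) Finiteness: the Misiurewicz--Szlenk type identification only gives $\lim_n \frac1n\log\Var(f^n)=\htop(f)$, i.e.\ control of the exponential rate; it does not bound $\lambda^{-n}\Var(f^n|_A)$. In the non-transitive (reducible) case -- which is exactly the case where only a semiconjugacy is claimed and which the theorem must cover -- the variation can grow like $n^k\lambda^n$ (two transitive pieces of equal entropy joined by a transient branch), so your $\mu$ can be identically $+\infty$ on nondegenerate arcs. (2) Additivity: variation is additive over a concatenation of arcs, but $\limsup$ is only subadditive, so from $\Var(f^n|_{A\cup B})=\Var(f^n|_A)+\Var(f^n|_B)$ you only get $\mu(A\cup B)\le\mu(A)+\mu(B)$. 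Without exact additivity the quotient pseudo-metric is not well behaved and, more importantly, the constant-slope conclusion (which is a statement about lengths of \emph{all} subarcs of a monotone branch) does not follow from the functional equation alone. Repairing both points is the real content of the Parry/Milnor--Thurston/Alsed\`a--Misiurewicz arguments: one must replace the bare $\limsup$ by an additive, shift-compatible functional (a normalized Abel/Ces\`aro limit, a Banach limit, a kneading-theoretic or transfer-operator eigenvector, or -- in the language of the present paper -- a $\lambda$-eigenvector of a transition structure), and this is not a routine verification.

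A secondary but also genuine error is in the transitive case: you assert that transitivity forces every nondegenerate arc $A$ to satisfy $f^n(A)=G$ for some $n$. That is the locally eventually onto property, which is strictly stronger than transitivity (a transitive interval map with the two-halves period-two structure, or even a mixing map whose endpoints are never covered, gives counterexamples). The conclusion you want, $\mu(A)>0$ for every nondegenerate arc, is still reachable, but by a different route: from subadditivity and $\mu(f(A))\le\lambda\,\mu(A)$ one shows null arcs have null images, and one then uses the (true, but requiring proof) fact that for a transitive graph map finitely many iterates of any set with nonempty interior cover $G$, contradicting $\mu(G)>0$. As written, this step rests on a false structural claim, so the homeomorphism part of the statement is not established either.
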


Throughout the paper we will use topological properties of special continua -- as a general reference with an extensive exposition concerning Peano continua see \cite{Nad92}.

\subsection{Tame graphs}

We will consider the class of so called \emph{tame graphs}, a particular class of ``countable graphs'' generalizing finite graphs.

Following \cite[Definition 9.3]{Nad92}, by $\ord(x, X)$ we denote the \emph{order of a point} $x$ in a space $X$. We denote by $E(X)$, resp. $B(X)$ the set of all points $x\in X$ such that $\ord(x, X)=1$ (\emph{endpoint}), resp. $\ord(x, X)\geq 3$ (\emph{branchpoint}).
An arc $\alpha$ in a continuum $X$ is called a \emph{free arc} if the set $\alpha^\circ = \alpha \setminus E(\alpha)$ is open in $X$.

A continuum $G$ will be called a \emph{tame graph} if the set $E(G)\cup B(G)$ has a countable closure.
A \emph{tame partition} for $G$ is a countable family $\P$ of free arcs with pairwise disjoint interiors covering $G$ up to a countable set of points.

\begin{figure}[htb!!]
	\centering
	\bigskip
	\includegraphics[width=11cm]{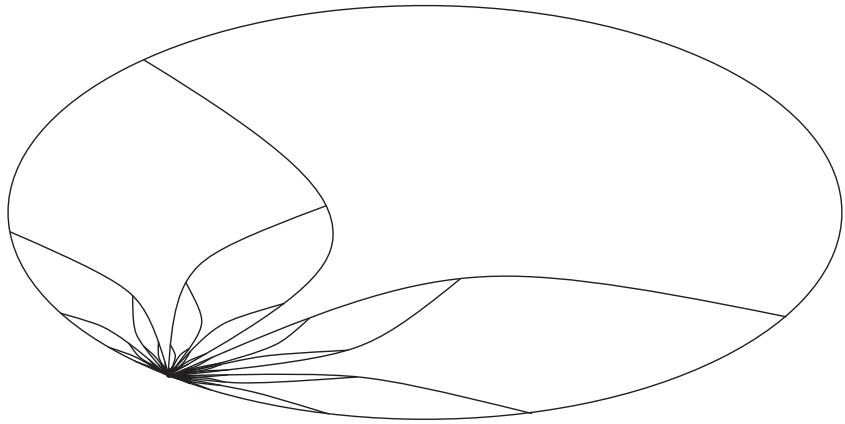}
	\caption{A planar tame graph that is a quotient of the Gehman dendrite.}
\end{figure}

\begin{theorem}
	The following conditions are equivalent for a continuum $G$:
	\begin{itemize}
		\item[(i)] $G$ is a tame graph.
		\item[(ii)] $G$ is the union of a countable sequence of free arcs and a countable set.
		\item[(iii)] $G$ admits a tame partition.
		\item[(iv)] Each point of $G$ but countably many has a neighborhood that is a finite graph.
	\end{itemize}
\end{theorem}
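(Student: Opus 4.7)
The plan is to establish the cyclic chain (i) $\Rightarrow$ (iii) $\Rightarrow$ (ii) $\Rightarrow$ (iv) $\Rightarrow$ (i). The middle step (iii) $\Rightarrow$ (ii) is immediate from the definitions, and (ii) $\Rightarrow$ (iv) is almost as easy: given a representation $G = \bigcup_n \alpha_n \cup Q$ with $Q$ countable and each $\alpha_n$ a free arc, any point $x$ lying outside the countable set $Q \cup \bigcup_n E(\alpha_n)$ belongs to some interior $\alpha_n^\circ$, which is open in $G$, and so $\alpha_n$ itself is a closed-arc---hence finite-graph---neighborhood of $x$.

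The main technical step will be (i) $\Rightarrow$ (iii). Set $C := \overline{E(G) \cup B(G)}$ and consider the open set $U := G \setminus C$. Every point of $U$ has $\ord(\cdot, G) = 2$ and admits a neighborhood in $G$ disjoint from $C$, so the plan is to invoke the classical fact that in a Peano continuum a point of order $2$ has arbitrarily small arc neighborhoods in order to conclude that $U$ is a topological $1$-manifold. Second countability of $G$ then forces $U$ to have only countably many components, each of which is either an open arc or a simple closed curve; a circular component would be clopen in the connected space $G$, forcing $G$ itself to be a circle---a case handled separately by cutting along one point. In the remaining cases the closure of each component is a free arc whose (at most two) endpoints lie in $C$, and the resulting countable family of free arcs has pairwise disjoint interiors and covers $G$ up to the countable set $C$, yielding a tame partition.

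For (iv) $\Rightarrow$ (i), let $D$ be the countable set of points without a finite-graph neighborhood. The key observation is that if $N$ is a finite-graph neighborhood of $x$, then for every $y \in \operatorname{int}_G N$ one has $\ord(y, G) = \ord(y, N)$, since $N$ is also a neighborhood of $y$; hence $(E(G) \cup B(G)) \cap \operatorname{int}_G N \subseteq E(N) \cup B(N)$ is finite. Covering $G \setminus D$ by countably many such open interiors (using second countability) shows $E(G) \cup B(G)$ is countable off $D$. Moreover, any limit point of $E(G) \cup B(G)$ lying outside $D$ must itself already be a branchpoint or endpoint of $G$, since its finite-graph neighborhood meets $E(G) \cup B(G)$ in only finitely many points. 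Therefore $\overline{E(G) \cup B(G)} \subseteq (E(G) \cup B(G)) \cup D$ is countable, establishing (i). The chief obstacle is the $1$-manifold decomposition in the first step, which rests on classical Peano-continuum structure theorems about points of order $2$.
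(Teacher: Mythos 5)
Your easy implications are fine: (iii)$\Rightarrow$(ii) is indeed immediate, (ii)$\Rightarrow$(iv) is the same observation the paper makes, and your (iv)$\Rightarrow$(i) argument (order is local, so a finite-graph neighborhood meets $E(G)\cup B(G)$ in a finite set, whence $\overline{E(G)\cup B(G)}$ is countable) is a correct, slightly more detailed version of the paper's one-liner. The problems are in (i)$\Rightarrow$(iii). The ``classical fact'' you invoke --- that in a Peano continuum a point of order $2$ has arbitrarily small arc neighborhoods --- is false: take the segment $[-1,1]\times\{0\}$ together with a null sequence of circles attached at the points $(\pm 2^{-n},0)$. This is a Peano continuum, the origin has order $2$, yet every neighborhood of the origin contains simple closed curves, hence is not an arc. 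What rescues the step is the stronger hypothesis you state but do not actually use: $x$ has an entire neighborhood missing $C=\overline{E(G)\cup B(G)}$. Then a closed connected subneighborhood $A$ is a nondegenerate subcontinuum all of whose points have order at most $2$, so by the characterization of arcs and simple closed curves (Nadler, Proposition 9.5) $A$ is an arc or a simple closed curve, and the latter is impossible unless $G$ itself is a circle. That is precisely the paper's argument for (i)$\Rightarrow$(ii); order $2$ at the single point $x$ alone does not suffice, so as written this is a genuine gap.

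A second concrete error: even granting that $U=G\setminus C$ is a $1$-manifold with countably many open components, the closure of a non-circular component need not be a free arc, because both ends may converge to the \emph{same} point of $C$. For the ``lollipop'' (a circle with an arc attached at a point $q$) one component of $U$ is the circle minus $q$, whose closure is the whole circle --- a free loop, not an arc --- so your family fails to be a tame partition; such components must be split into two arcs. This is exactly the point of the paper's step (ii)$\Rightarrow$(iii), which passes through maximal free arcs \emph{and free loops}. (You should also justify that each end of a component converges to a single point: its limit set is a nested intersection of continua contained in the countable set $C$, hence a singleton.) Finally, the separately handled circle case requires two cut points, not one: a circle with one point removed is not the interior of a free arc. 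With these repairs your route coincides in substance with the paper's chain (i)$\Rightarrow$(ii)$\Rightarrow$(iii)$\Rightarrow$(iv)$\Rightarrow$(i), merely merging the first two steps.
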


\begin{proof}
If $G$ is degenerate or a simple closed curve, the theorem follows easily.

(i)$\implies$(ii):~By \cite[Theorem 10.4]{Nad92} one obtains that every tame graph is \emph{hereditarily locally connected} (otherwise there would be so called \emph{continuum of convergence} and hence uncountable many points of infinite order).

Let $F$ be the countable closure of $E(G)\cup B(G)$. For every point $x\in G\setminus F$ there is a closed and connected neighborhood $A$ of $x$ which does not intersect $F$.
Clearly every point in $A$ is of order at most two. It follows by \cite[Proposition 9.5]{Nad92} that $A$ is either an arc or a simple closed curve. Since we assume that $G$ is neither degenerate nor a simple closed curve, $A$ has to be an arc. It follows that $x$ is an element of a free arc. Now, we are able to find countably many free arcs covering $G\setminus F$.


(ii)$\implies$(iii) follows from the fact that every free arc of $G$ is contained in a maximal free arc or in a free loop (i.e. a simple closed curve $\alpha \subset G$ such that $\alpha \setminus \{x\}$ is open in $G$ for some point $x \in \alpha$). These maximal free arcs and free loops have pairwise disjoint interiors.

(iii)$\implies$(iv):~Clearly every point in the interior of a free arc has this free arc as a neighborhood.

(iv)$\implies$(i):~No point of $G$ that has a finite graph neighborhood can be a limit point of $E(G) \cup B(G)$.
\end{proof}

\begin{remark}
	We have seen in the proof that every tame graph is hereditarily locally connected. It is an easy exercise to verify that every subcontinuum of a tame graph is a tame graph as well.
\end{remark}

\begin{remark}
	Every finite graph is a tame graph. Also, being a finite graph is equivalent to any of the following conditions:
		(i) $E(G) \cup B(G)$ is finite;
		(ii) $G$ is the finite unions of free arcs;
		(iii) $G$ admits a finite tame partition;
		(iv) each point of $G$ has a finite graph neighborhood.
\end{remark}

By the above theorem and the sum theorem \cite[Theorem 1.5.3]{En78}, every tame graph is a one-dimensional Peano continuum. Any tame graph $G$ is considered to be embedded in 3-dimensional Euclidean space~\cite{CoEaLiRu95}, from which it inherits the Euclidean metric $d$ inducing its topology $\tau$.

Euclidean space also carries a function $\ell$ which assigns to each rectifiable curve its length. Unfortunately, we cannot define constant slope maps on arbitrary tame graphs since the arcs composing the graph might not be rectifiable. Therefore we proceed as follows.

A tame graph $G'\subset\mathbb{R}^3$ is called \emph{countably affine} if it admits a tame partition whose arcs are all line segments.
The \emph{total length} of $G'$ is the sum of the lengths of those line segments, and may be either finite or infinite.
We say that $g\maps G' \to G'$ is a \emph{countably affine graph map} if $G'$ admits a tame partition into segments such that the restriction of $g$ to each segment is an affine map.
Moreover, every affine map between two line segments has a naturally defined \emph{slope}, namely, the ratio of the lengths of the line segments, and $g$ is said to have \emph{constant (resp. bounded) slope $\lambda$} if the restrictions of $g$ have slope $\lambda$ (resp. $\leq\lambda$).

\subsection{Main goal and structure of exposition}

The question we want to address is: \emph{when is a continuous countably Markov and mixing map $f$ of a tame graph $G$ conjugate to a countably affine graph map $g\maps G'\to G'$ of constant slope $e^{\htop(f)}$?} Our general strategy will follow that of~\cite{BoRo17} with some modification to adapt it to more general tame graphs.

In Section~\ref{s:markov} we define our class of maps and review the basic properties of Markov partitions. Section~\ref{s:main} develops our main results, giving conditions for the existence of a conjugate map of constant slope. Section~\ref{s:horseshoes} makes a stronger connection between a Markov tame graph map and its symbolic dynamics, allowing us to show that entropy in our class of maps is given by horseshoes. Finally, Section~\ref{s:lipschitz} relates the entropy of the maps we consider to the Lipschitz constants of compatible metrics.

\section{Markov maps on tame graphs}\label{s:markov}

Let $f\maps G \to G$ be a continuous map on a tame graph.
A family $\P$ is called a \emph{countable Markov partition} for $f$ if
\begin{itemize}
	\item $\P$ is a tame partition for $G$. In particular, for every $i \in \P$, $i^\circ = i \setminus E(i)$ is open in $G$, and so any point in $i \cap B(G)$ is an endpoint of $i$.
	\item For every $i \in \P$, $f|_i$ is \emph{monotone}, i.e. $f$ maps $i$ homeomorphically onto its image $f(i)$. So the partition $\P$ witnesses that the map $f$ is \emph{countably monotone}.
	\item For every $i , j \in \P$, if $f(i) \cap j^{\circ} \neq \emptyset$, then $f(i) \supset j$.
\end{itemize}

\begin{remark}\label{r:6}Let $G$ be a tame graph, consider a countable Markov partition $\P$ for a map $f\maps G\to G$, and let $\alpha\subset G$ be a simple closed curve. Then the set $P\cap\alpha$ contains at least two points.
\end{remark}

A continuous map $f\maps G\to G$ is said to belong to the class $\CPM(G)$ (\emph{countably Markov and mixing}) if
\begin{itemize}
\item $f$ admits a countably infinite Markov partition.
\item $f$ is \emph{topologically mixing}, i.e., for every pair of nonempty open sets $U,V$ there is an $n$ such that $f^m(U)\cap V\neq\emptyset$ for all $m\ge n$.
\end{itemize}

\begin{remark}\label{r:1} We will argue in Corollary \ref{c:3} that for each $f\in\CPM(G)$, the set $\Per(f)$ of periodic points is dense in $G$ and $\htop(f)>0$.
\end{remark}

\begin{remark}\label{r:2}~A map $f$ from $\CPM(G)$ has to satisfy one of the following two possibilities: Either there exists a nonempty open set $U$ such that for every $n$, $f^n(U)\subsetneq G$; or for every nonempty open set $U$ there is an $n$ for which $f^n(U)=G$. In the latter case, $f$ will be called \emph{leo} (\emph{locally eventually onto}). Our attention will be paid to both leo/non-leo types of maps.
\end{remark}

\begin{remark}\label{r:4}
	Let $G$ be a finite graph and let $f\maps G\to G$ be a continuous piecewise monotone Markov graph map, i.e., such that $f$ admits a finite Markov partition. If $f$ is topologically mixing, then $f$ also admits countably infinite Markov partitions, and we will consider $f$ as an element of $\CPM(G)$.
\end{remark}

The basic properties of Markov partitions as regards iteration are summarized in the next lemma.

\begin{lemma}\label{lem:refine}Suppose $f\in \CPM(G)$ with a partition set $\P$, let $n\ge 0$.
  Then
\begin{enumerate}[label=(\roman*)]
\item\label{it:mar} $P_n:=f^{-n}(P)$  defines a countable Markov partition $\P_n$ for $f^j$ whose elements are the closures of connected components of $G\setminus P_n$, provided $j\leq n+1$.
\item\label{it:int} The partition arcs of $\P_n$ are the arcs of the form
\begin{equation*}
[i_0i_1\cdots i_n]:=i_0\cap f^{-1}i_1 \cap \cdots \cap f^{-n}i_n, \quad \text{where } i_0,\ldots,i_n \in\P.
\end{equation*}
\item\label{it:hom} For each $[i_0\cdots i_n]\in\P_n$, the restricted map $f^n|_{[i_0\cdots i_n]}\maps [i_0\cdots i_n] \to [i_n]$ is a homeomorphism.
\item\label{it:dense} $Q:=\bigcup_{n=0}^\infty P_n = \bigcup_{i=0}^\infty f^{-i}(P)$ is dense in $G$.
\end{enumerate}
\end{lemma}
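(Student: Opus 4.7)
My plan is to prove parts (i), (ii), and (iii) together by induction on $n$ and then derive (iv) from topological mixing. The base case $n = 0$ is immediate because $\P_0 = \P$ by hypothesis and $[i_0] = i_0$. For the induction step, I refine $\P_n$ to $\P_{n+1}$ by pulling the partition $\P$ back through the homeomorphism $f^n\vert_{[i_0 \cdots i_n]} \maps [i_0 \cdots i_n] \to i_n$ (supplied by (iii) at stage $n$) and then through $f|_{i_n}$: since $f|_{i_n}$ is monotone and the Markov property forces $f(i_n) \supseteq j$ whenever $f(i_n) \cap j^{\circ} \neq \emptyset$, each new piece is exactly an arc of the form $[i_0 \cdots i_n\,i_{n+1}]$ with $i_{n+1} \subseteq f(i_n)$, yielding (ii). Composing the monotone restrictions gives (iii) for $n+1$, and the Markov condition on $\P_n$ under $f^j$ for $j \leq n+1$ reduces, via $f^j([i_0 \cdots i_n]) = [i_j \cdots i_n]$, to the Markov condition on $\P$ itself.

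For (iv), suppose $Q$ fails to be dense, so there is a nonempty open $U$ disjoint from $Q$. Using local connectedness of the Peano continuum $G$, shrink $U$ to a connected nonempty open subset. Because $U$ avoids $P_n$ for every $n$, $U$ must lie inside the interior of a single arc of $\P_n$; these arcs nest consistently as $n$ grows, producing one sequence $i_0, i_1, \ldots \in \P$ with $f^n(U) \subseteq i_n$ for every $n$ by (iii). Fix any $i^* \in \P$. If $i_n = i^*$ for infinitely many $n$, then $f^n(U) \cap (G \setminus i^*) = \emptyset$ at those times, contradicting topological mixing applied to $U$ and the nonempty open set $G \setminus i^*$ (nonempty since $\P$ is infinite). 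Otherwise $i_n \neq i^*$ for all large $n$, and disjointness of interiors of $\P$-arcs gives $f^n(U) \cap (i^*)^{\circ} = \emptyset$ for all large $n$, again contradicting mixing.

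The delicate point in (i)-(iii) is bookkeeping: the terminal arc $f(i_n)$ may contain infinitely many $\P$-arcs, so the refinement at each step is countably infinite, and one must verify that adjoining the new cut points $f^{-(n+1)}(P)$ decomposes $[i_0 \cdots i_n]$ into arcs plus at most a countable residue—this transfers the tame-partition structure of $i_n$ back through the homeomorphism $f^n\vert_{[i_0 \cdots i_n]}$. For (iv), the main obstacle is that we have no a priori diameter control on the arcs $i_n$, so the contradiction with mixing must be extracted from the combinatorial rigidity of disjoint interiors in $\P$ rather than from any shrinking-diameter argument.
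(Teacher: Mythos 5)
The paper gives no argument to compare against: it states Lemma~\ref{lem:refine} and explicitly omits its proof, so your proposal can only be judged on its own terms. Your inductive scheme for (i)--(iii) and your mixing argument for (iv) are essentially the natural ones, and (iv) in particular is correct as written: the words are automatically nested because $[i_0\cdots i_{n+1}]\subseteq[i_0\cdots i_n]$ by definition, and your two cases do contradict mixing, using that an arc $i_n\neq i^*$ cannot even meet $(i^*)^\circ$ (an endpoint of $i_n$ inside the open set $(i^*)^\circ$ would drag interior points of $i_n$ into it, violating disjointness of interiors).

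There is, however, one substantive step in (i)--(iii) that you use but never address, and it is not the bookkeeping you flag at the end. Refining inside each $[i_0\cdots i_n]$ by pulling $P$ back through $f^{n+1}$ accounts for \emph{all} of $\P_{n+1}$ only if every component of $G\setminus P_{n+1}$ lies inside a single arc of $\P_n$, i.e.\ only if $P_n\subseteq P_{n+1}$, equivalently $f(P)\subseteq P$; this inclusion is also what the paper silently invokes later (``Notice that $P_n\subset P_{n+1}$'' in the proof of Proposition~\ref{p:1}), and it is genuinely the least trivial point of the lemma. For $x\in P$ an endpoint of some partition arc $i$ it follows from the Markov property: $f(x)$ is an endpoint of the arc $f(i)$, and if $f(x)\in j^\circ$ then $f(i)\supseteq j$, yet a subarc of $f(i)$ containing an endpoint of $f(i)$ has that point as its own endpoint, a contradiction. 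But $P$ also contains the countable residue of the tame partition, points lying on \emph{no} partition arc, and there one needs the hereditary local connectedness of tame graphs: if arcs of diameter bounded below accumulated at such a point $x$, trimming them to subarcs of their (pairwise disjoint) interiors would produce a continuum of convergence, which is impossible; hence all partition arcs meeting a sufficiently small neighbourhood $W$ of $x$ have small diameter. If now $f(x)\in j^\circ$, choose $W$ so small that $f(W)$ and the $f$-image of every partition arc meeting $W$ lie in $j^\circ$; since $W$ meets some $i^\circ$ by density, the Markov property gives $f(i)\supseteq j$ while $f(i)\subseteq j^\circ$, a contradiction because $j\not\subseteq j^\circ$. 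Adding this argument (and the minor remark that $[i_0\cdots i_n]$ is an arc exactly for admissible words, being degenerate or empty otherwise, which your description handles implicitly) closes the only real gap in your outline.
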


We omit the proof of Lemma \ref{lem:refine}. For a given $f\in\CPM(G)$ with a Markov partition $\P$ we associate to $f$ and $\P$ the \emph{transition matrix} $M=M(f) = (m_{ij})_{i,j\in \P}$ defined by
\begin{equation}\label{e:3}
m_{ij} = \begin{cases}
1 & \text{if } f(i) \supset j,\\
0 & \text{otherwise.}
\end{cases}
\end{equation}
This matrix in turn determines a countable state Markov shift $(\Sigma_M,\sigma)$, where
\begin{equation*}
\Sigma_M=\left\{i_0i_1\cdots\in\P^{\mathbb{N}_0}\colon~m_{i_n i_{n+1}}=1, n=0,1,\ldots\right\}
\end{equation*}
and $\sigma(i_0i_1\cdots)=i_1i_2\cdots$ is the shift map. The topology of $\Sigma_M$ is inherited from the product space $\P^{\mathbb{N}_0}$, where $\P$ has the discrete topology. The partition arcs of $\P_n$ described in Lemma~\ref{lem:refine}~\ref{it:int} correspond to all the words $i_0\cdots i_n$ which are allowable in this shift space, i.e., such that $m_{i_0i_1}=\cdots=m_{i_{n-1}i_n}=1$.


%

\section{Conjugacy to maps of constant or bounded slope}\label{s:main}

Let $f\in\CPM(G)$ with a Markov partition $\P$ and its transition matrix $M(f)=(m_{ij})_{i,j\in \P}$. We will be interested in positive real numbers $\lambda>1$ and positive sequences $(v_i)_{i\in\P}$ satisfying the inequalities

\begin{equation}\label{e:2}\forall~i\in \P\colon~\sum_{j\in \P}m_{ij}v_j\le \lambda~v_i.\end{equation}

Any nonzero nonnegative sequence $v=(v_i)_{i\in\P}$ satisfying (\ref{e:2}) will be called a \emph{$\lambda$-subeigenvector}. We call $v$ \emph{summable} if $v\in\ell^1(\P)$.
We say $v$ is \emph{deficient in coordinate $i$} if 
there is strict inequality $\sum_{j\in \P} m_{ij} v_j < \lambda v_i$.

We call $v$ a \emph{$\lambda$-eigenvector} if there is no deficiency, i.e. if there is equality in~\eqref{e:2} for each $i\in\P$.

\begin{remark}\label{r:5}
    (i) The mixing property of $f$ immediately implies \emph{irreducibility} of $M$ (for all $i,j\in\P$ there is $n\in\mathbb{N}$ such that $m_{ij}(n)>0$).  Second, irreducibility of $M$ immediately implies that each subeigenvector has all entries strictly positive. (ii) If $f$ is leo, then every $\lambda$-subeigenvector is summable. This is because (\ref{e:2}) can be rewritten for $n>0$ as
    $$\forall~i\in \P\colon~\sum_{j\in \P}m_{ij}(n)v_j\le \lambda^n~v_i$$
    where $M^n=(m_{ij}(n))_{i,j\in\P}$ -- see (\ref{e:11}) -- and for $f$ leo,
    $$\forall~i\in\P~\exists~n_0(i)~\forall~n>n_0(i)~\forall~j\in\P\colon~ m_{ij}(n)\ge 1,$$
    hence for a given $i$ and $n>n_0(i)$
    $$\sum_{j\in\P}v_j\le \sum_{j\in\P}m_{ij}(n)v_j\le \lambda^n~v_i<\infty.$$
\end{remark}

Using Remark \ref{r:5}(i), for a $\lambda$-subeigenvector $v$ and all $i\in\P$ we put

\begin{equation}\label{deltapsi}
\lambda_{i}:=\frac{(Mv)_i}{v_i}\le \lambda.
\end{equation}

\subsection{Statement of main results}

We now state our main results. We give a partial solution to the constant slope problem for mixing maps, and a complete solution for leo maps.
Since we will work with maps from $\CPM(G)$ that are topologically mixing, we formulate our statement for topological conjugacies only -- compare with \cite[Proposition 4.6.9]{ALM00}.

\begin{theorem}\label{t:main}
Let $f\in\CPM(G)$ with transition matrix $M$. In order for $f$ to be conjugate to a countably affine graph map of constant slope $\lambda>1$,
\begin{enumerate}
\item[(i)] it is necessary that $M$ admits a $\lambda$-eigenvector, and
\item[(ii)] it is sufficient that $M$ admits a summable $\lambda$-eigenvector.
\end{enumerate}
\end{theorem}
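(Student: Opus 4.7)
For part (i), given a conjugacy $\phi\maps G\to G'$ between $f$ and a constant slope $\lambda$ map $g\maps G'\to G'$, I would set $v_i := \ell(\phi(i))$ for $i\in\P$, where $\ell$ denotes arc length measured through the tame partition of $G'$ into line segments. Positivity of $v_i$ is immediate since $\phi$ is a homeomorphism and $i$ is a nondegenerate arc. Finiteness requires a small argument: if some $v_i=\infty$, then via irreducibility of $M$ (Remark~\ref{r:5}(i)) and the eigenvector-type bound propagated by $g$, the same would hold along any cycle of states, contradicting that $G'$ decomposes into segments of finite length. The eigenvector equation then follows from two observations: since $g|_{\phi(i)}$ is a homeomorphism (because $f|_i$ is) and $g$ is affine of slope $\lambda$ on each segment of its defining tame partition, arc length scales uniformly, so $\ell(g(\phi(i))) = \lambda v_i$; and by the Markov property together with the conjugacy identity, $g(\phi(i)) = \phi(f(i))$ agrees up to a countable set with the union of the arcs $\phi(j)$ satisfying $m_{ij}=1$, which have pairwise disjoint interiors. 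Summing their lengths gives $\ell(g(\phi(i))) = \sum_j m_{ij} v_j$, hence $\lambda v_i = \sum_j m_{ij} v_j$.

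For part (ii), starting from a summable $\lambda$-eigenvector $v$, I would construct the target combinatorially: for each $i\in\P$ introduce a closed line segment $I_i$ of length $v_i$, identify the endpoints of the $I_i$ according to the identification pattern of the arcs of $\P$ in $G$, and complete by the induced length metric (including the countably many ``non-partition'' points of $G$). Summability makes the total length $\sum_i v_i$ finite, and the resulting compact tame graph can be realized as a countably affine subgraph $G'$ of $\mathbb{R}^3$ by an embedding argument along the lines of~\cite{CoEaLiRu95}. The map $g\maps G'\to G'$ is then defined on each $I_i$ as the unique affine map of slope $\lambda$ sending $I_i$ onto the concatenation of those $I_j$ with $m_{ij}=1$, arranged in the combinatorial order induced by $f$. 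The eigenvector equation $\lambda v_i = \sum_j m_{ij} v_j$ guarantees that the lengths match precisely, so $g$ is well-defined and continuous on $G'$.

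The conjugacy $\phi\maps G\to G'$ is defined piecewise, sending each arc $i\in\P$ via the orientation-preserving affine parameterization onto $I_i$, and is extended to the countable remainder $G\setminus\bigcup\P$ by continuity. The main obstacle will be verifying that $\phi$ is a homeomorphism: both $\phi$ and $\phi^{-1}$ must be continuous at the non-partition points, which may include branchpoints of possibly infinite order. For this I would invoke Lemma~\ref{lem:refine}: each $\P_n$-arc $[i_0\cdots i_n]$ corresponds in $G'$ to a segment of length $v_{i_n}/\lambda^n$ by an $n$-fold application of the eigenvector equation. Since $v$ is summable we have $v_j\to 0$ as $j$ exhausts $\P$, and combined with $\lambda^{-n}\to 0$, the $\P_n$-arcs around any fixed point shrink to zero length in $G'$ as $n\to\infty$. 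Together with the density of $\bigcup_n\P_n$ from Lemma~\ref{lem:refine}\ref{it:dense}, this produces continuity of $\phi$, and the reverse direction is symmetric. The delicate point is obtaining the required control uniformly near accumulation points of the partition, which is precisely where the summability hypothesis (as opposed to merely having a $\lambda$-eigenvector) becomes essential.
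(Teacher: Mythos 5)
Your part (i) is essentially the paper's own proof (set $v_i=\ell(\phi(i))$ and read the eigenvector identity off the constant slope and the Markov property), and it is fine. In part (ii), however, there is a genuine gap at the central point: the pair $(\phi,g)$ you describe does not satisfy $\phi\circ f=g\circ\phi$. You define $g$ on $G'$ directly as the slope-$\lambda$ Markov map and, independently, define $\phi$ on each partition arc $i$ as ``the orientation-preserving affine parameterization onto $I_i$''. The arcs of $G$ need not be rectifiable, so this already hides an arbitrary choice of homeomorphism $i\to I_i$; and for any such choice the conjugacy identity holds only at endpoints of partition arcs. On the interior of $i$ the images of the points of $Q=\bigcup_n P_n$ are forced: conjugacy with a constant-slope $g$ requires that the $\P_n$-arc $[i_0\cdots i_n]$ be sent onto the unique subsegment of $I_{i_0}$ that $g^n$ maps affinely onto $I_{i_n}$, i.e.\ a subsegment of length $v_{i_n}/\lambda^n$ placed in the prescribed order, whereas your parameterization puts these points wherever the chosen homeomorphism happens to put them. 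So $\phi$ has to be defined on the dense set $Q$ by this combinatorial matching and then shown to extend to a continuous injection of all of $G$; that is precisely the nontrivial content of the paper's arc-map $A$ in \eqref{e:22}, of Proposition~\ref{p:1} (positivity and additivity give injectivity, the shrinking property \eqref{e:14} gives the existence of the extension), and of the metric $\varrho$ of Theorem~\ref{t:1}. You do invoke the correct quantity $v_{i_n}/\lambda^n$, but only when discussing continuity; your stated definition of $\phi$ is incompatible with it.

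The remaining steps are also asserted rather than proved, and they are not routine. Continuity of the extension at points of $G\setminus\bigcup\P$ (accumulation points of partition arcs, possibly branchpoints of infinite order) does not follow from $v_{i_n}/\lambda^n\to0$ alone: one needs tail estimates saying that for every $\eps>0$ there is a finite $\F\subset\P_n$ with $\sum_{\alpha\in\P_n\setminus\F}\Delta(\alpha)<\eps$, which is where summability genuinely enters (this is the case analysis in the proof of Theorem~\ref{t:1}(iii)); and ``the reverse direction is symmetric'' is not an argument, though none is needed, since a continuous bijection from the compact $G$ onto the Hausdorff $G'$ is automatically a homeomorphism. Likewise, realizing the abstractly glued and completed length space as a compact countably affine graph in $\mathbb{R}^3$ is itself nontrivial; the paper's Theorem~\ref{t:graph} (the injection $\pi_r$ of $P$ into $\bbr$ and the zig-zags placed in a sheaf of planes) exists exactly for this purpose, and a pointer to a graph-drawing reference does not replace it. Your plan of building $G'$ and $g$ combinatorially from the eigenvector could be carried out, but only after $\phi$ is defined through the refined partitions as above; at that point the argument essentially reproduces the paper's construction rather than shortcutting it.
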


By remark~\ref{r:5}~(ii), this gives us our solution for leo maps.

\begin{corollary}\label{c:main}
Let $f\in\CPM(G)$ be leo with transition matrix $M$. Then $f$ is conjugate to a countably affine graph map of constant slope $\lambda>1$ if and only if $M$ admits a $\lambda$-eigenvector.
\end{corollary}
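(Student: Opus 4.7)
The plan is to derive this corollary directly from Theorem~\ref{t:main} combined with Remark~\ref{r:5}(ii), without introducing any new machinery. The statement is an ``if and only if'', so I will handle the two directions separately, and in both cases the work amounts to verifying that the hypotheses of the corresponding part of Theorem~\ref{t:main} are met.

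For the necessity direction (``only if''), suppose $f$ is conjugate to a countably affine graph map of constant slope $\lambda>1$. Then Theorem~\ref{t:main}(i) applies verbatim and yields a $\lambda$-eigenvector for $M$. The leo hypothesis is not even needed for this direction; the corollary just inherits the necessity statement from the theorem.

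For the sufficiency direction (``if''), suppose $M$ admits a $\lambda$-eigenvector $v$. The point is that, since equality in~\eqref{e:2} is a special case of the inequality defining a $\lambda$-subeigenvector, $v$ is in particular a $\lambda$-subeigenvector. Now the leo hypothesis enters: by Remark~\ref{r:5}(ii), every $\lambda$-subeigenvector of an leo map from $\CPM(G)$ lies in $\ell^1(\P)$, so $v$ is summable. At that point the hypothesis of Theorem~\ref{t:main}(ii) is satisfied and I conclude that $f$ is conjugate to a countably affine graph map of constant slope $\lambda$.

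There is no substantial obstacle, as both halves are immediate from the results already established; the only content is the observation that the leo hypothesis automatically upgrades ``$\lambda$-eigenvector'' to ``summable $\lambda$-eigenvector'', bridging the gap between the necessary and sufficient conditions in Theorem~\ref{t:main}.
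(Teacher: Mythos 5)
Your argument is correct and is exactly the paper's route: the paper deduces Corollary~\ref{c:main} from Theorem~\ref{t:main} by noting (Remark~\ref{r:5}(ii)) that for leo maps every $\lambda$-subeigenvector, hence every $\lambda$-eigenvector, is summable, so the sufficiency hypothesis of Theorem~\ref{t:main}(ii) is automatic, while necessity is Theorem~\ref{t:main}(i) verbatim. Nothing further is needed.
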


\begin{remark}The condition of Corollary \ref{c:main} with $\lambda=e^{\htop(f)}$ is fulfilled when the map $f$ is leo and its transition matrix $M(f)$ is recurrent -- compare with \cite{BoBru15}.
\end{remark}

Our proof techniques apply not only to constant slope maps, but also to bounded slope maps, leading us to the following theorem whose consequences we explore in Section~\ref{s:lipschitz}. The interval version of the following theorem was recently proved in \cite[Theorem 3.7]{BoRo17}.

\begin{theorem}\label{t:broad}
Let $f\in\CPM(G)$ with transition matrix $M$. In order for $f$ to be conjugate to a countably affine graph map of bounded slope $\lambda>1$,
\begin{enumerate}
\item[(i)] it is necessary that $M$ admits a $\lambda$-subeigenvector, and
\item[(ii)] it is sufficient that $M$ admits a summable $\lambda$-subeigenvector with deficiency in only finitely many entries.
\end{enumerate}
\end{theorem}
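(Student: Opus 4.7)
The proof follows the strategy used for Theorem~\ref{t:main}, with the subeigenvector inequality replacing the eigenvector equation, and with extra care needed to keep slopes uniformly bounded by~$\lambda$.

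For necessity (i), suppose $\phi\maps G\to G'$ conjugates $f$ with a countably affine map $g\maps G'\to G'$ of bounded slope $\lambda$, and set $v_i:=\ell(\phi(i))$, using the affine tame partition of $G'$ to measure length. I would first verify that each $v_i$ is finite: pick an affine segment $S$ of $G'$ with $\ell(S)<\infty$ and, using Lemma~\ref{lem:refine}, choose an iterated partition arc $[i_0\cdots i_n]$ with $\phi([i_0\cdots i_n])\subset S$. Bounded slope gives
$$v_{i_n} \;=\; \ell\bigl(g^n(\phi([i_0\cdots i_n]))\bigr) \;\le\; \lambda^n\,\ell(\phi([i_0\cdots i_n])) \;\le\; \lambda^n\,\ell(S) \;<\;\infty,$$
and irreducibility of $M$ (Remark~\ref{r:5}(i)) propagates finiteness to every other coordinate via the same bounded-slope estimate applied along paths in $M$. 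Conjugacy $g\circ\phi=\phi\circ f$ combined with the fact that the arcs $\phi(j)$, for $j$ with $m_{ij}=1$, cover $\phi(f(i))$ with pairwise disjoint interiors yields
$$\sum_{j\in\P}m_{ij}v_j \;=\; \ell(\phi(f(i))) \;=\; \ell(g(\phi(i))) \;\le\; \lambda\,v_i,$$
so $v$ is a $\lambda$-subeigenvector.

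For sufficiency (ii), let $v$ be a summable $\lambda$-subeigenvector deficient in only finitely many coordinates, and set $\lambda_i:=(Mv)_i/v_i\le\lambda$ as in~\eqref{deltapsi}, so that $\lambda_i=\lambda$ outside a finite set. For each pair $(i,j)$ with $m_{ij}=1$ define $a_{i,j}:=v_j/\lambda_i$; then $a_{i,j}\ge v_j/\lambda$ and $\sum_j m_{ij}a_{i,j}=v_i$. I would then construct the target graph $G'$ by taking a line segment $J_i$ of Euclidean length $v_i$ for each $i\in\P$ and glueing their endpoints following the incidence pattern of the partition $\P$ in $G$; summability $\sum_i v_i<\infty$, combined with the countable closure of $E(G)\cup B(G)$, allows the abstract result to be realized as a tame graph embedded in $\mathbb{R}^3$. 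Define $\phi\maps G\to G'$ by mapping each $i$ homeomorphically onto $J_i$, and define $g$ on each $J_i$ by subdividing it into sub-segments of lengths $a_{i,j}$ in the order in which $f|_i$ visits the arcs $j$, mapping each sub-segment affinely onto the corresponding $J_j$ with slope $v_j/a_{i,j}=\lambda_i\le\lambda$. Then $g\circ\phi=\phi\circ f$ by construction, and the countable collection of all sub-segments is a tame affine partition of $G'$ witnessing that $g$ has bounded slope $\lambda$.

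The main obstacle will be the topological/metric realization: checking that $G'$ is a tame graph and that $\phi$ is a homeomorphism, particularly at accumulation points of $E(G)\cup B(G)$, where infinitely many partition arcs of $\P$ may converge. Here summability is essential, since it forces $v_i\to 0$ along any such accumulation, so the segments $J_i$ shrink and cluster in $G'$ exactly as the arcs $i$ cluster in $G$. The finite-deficiency hypothesis keeps the affine partition coherent: outside a finite set the rule $a_{i,j}=v_j/\lambda$ is canonical and forced, and only for finitely many $i$ must the extra slack $v_i-\sum_j m_{ij}v_j/\lambda$ be distributed among the sub-segments; permitting infinite deficiency would require juggling infinitely many non-canonical sub-partitions at once, threatening tameness of the resulting affine structure on $G'$.
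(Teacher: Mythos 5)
Your necessity argument is fine and is essentially the paper's proof (set $v_i=\ell(\phi(i))$ and read off the subeigenvector inequality from bounded slope); the extra care you take about finiteness of the $v_i$ is a reasonable supplement rather than a deviation.

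The sufficiency part, however, has a genuine gap at its central claim. You take an arbitrary homeomorphism $\phi|_i\maps i\to J_i$ and a piecewise affine $g$ on each $J_i$, and assert ``$g\circ\phi=\phi\circ f$ by construction.'' This is false for an arbitrary choice of $\phi|_i$: the conjugacy equation, iterated, forces $\phi$ to send every level-$n$ arc $[i_0\cdots i_n]\in\P_n$ onto a subarc of $J_{i_0}$ of length exactly $\Delta([i_0\cdots i_n])=v_{i_n}/\prod_{\ell=0}^{n-1}\lambda_{i_\ell}$ as in \eqref{e:8}, so $\phi$ is completely determined on the dense set $Q=\bigcup_n f^{-n}(P)$ and the whole difficulty is to show that these constraints extend to a homeomorphism of $G$ onto $G'$. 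That requires the consistency relations \eqref{welldefinedDelta}--\eqref{expansionDelta} and, crucially, the limit $\lim_n\Delta([i_0\cdots i_n])=0$ along every itinerary (the paper's \eqref{e:14}, inside the proof of Proposition~\ref{p:1}(iii)); without it the prospective $\phi$ is not injective (a nested sequence of arcs shrinking to a point would have to map onto a segment of positive length). This is precisely where topological mixing and the finite-deficiency hypothesis are used: a symbol recurring infinitely often yields a contraction factor $c<1$ infinitely often, and otherwise all but finitely many symbols satisfy $\lambda_j=\lambda>1$, forcing the denominator to grow. Your proposal never proves this and misattributes the role of finite deficiency to ``tameness'' of the affine structure on $G'$; with infinitely many deficient coordinates what can fail is \eqref{e:14}, hence the conjugacy itself, not tameness. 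In addition, realizing the abstract gluing of the segments $J_i$ as a countably affine tame graph in $\mathbb{R}^3$ that is homeomorphic to $G$ \emph{at the accumulation points of} $P$ is not automatic from summability: the paper first builds the arc map $A$ and the compatible metric $\varrho$ (Theorem~\ref{t:1}(iii), which again needs \eqref{e:14} via \eqref{e:15}), then embeds $(P,\varrho)$ into $\bbr$ and attaches zig-zags of length $A(i)$ (Theorem~\ref{t:graph}); some argument of this kind is needed in place of your one-sentence appeal to $v_i\to 0$.
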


The necessity of the (sub)eigenvector condition is more or less trivial to prove.
\begin{proof}[Proof of Theorem~\ref{t:main}~(i) and Theorem~\ref{t:broad}~(i)]
Write $\P$ for the Markov partition, $g\maps G'\to G'$ for the countably affine graph map of constant (bounded) slope $\lambda$, and $\phi\maps G\to G'$ for the conjugating homeomorphism. Now put $v_i=\ell(\phi(i))$ for all $i\in\P$. The constant (bounded) slope condition implies immediately that $v$ is a $\lambda$-(sub)eigenvector.
\end{proof}

The sufficiency of the (sub)eigenvector condition is much harder to prove. It requires us to construct the countably affine graph $G'$, the constant (bounded) slope map $g$, and the conjugating homeomorphism $\phi\maps G\to G'$ all from scratch. To do it, we first construct an \emph{arc map} $A$ for $G$ with the same properties as we want the length function $\ell$ to have with respect to $G'$, $g$.

\subsection{Arc maps}

In what follows we assume that a summable $\lambda$-subeigenvector  $v=(v_i)_{i\in \P}$ satisfying (\ref{e:2}) is normalized so that $\sum v_i=1$, and that the set $\P'=\{i:~\lambda_i<\lambda\}$ of coordinates in which $v$ is deficient is finite. Denote for $[i_0\cdots i_n]\in\P_n$,
\begin{equation}\label{e:8}\Delta([i_0\cdots i_n]):=
\frac{v_{i_n}}{\prod_{\ell=0}^{n-1}\lambda_{i_{\ell}}},
\end{equation}
where the empty product (when $n=0$) is taken to be $1$; by Remark \ref{r:5} $$\Delta([i_0\cdots i_n])>0\text{ for each }n\ge 0.$$

For a Peano continuum, let us denote by $\A(X)$ the set containing all points from $X$ and all arcs in $X$. Any map $F\maps \A(X)\to\bbr$ is called an arc-map defined on $X$. Using (\ref{e:8}) we define arc-maps $A, A_n\maps \A(G)\to [0,\infty)$ by
\begin{equation}\label{e:6}
	A_n(\gamma) = \sum_{\gamma \supset [i_0\cdots i_n] \in \P_n} \Delta([i_0\cdots i_n])
\end{equation}
and $A_n(\gamma)=0$ when $[i_0\cdots i_n]\nsubseteq \gamma$ for any $[i_0\cdots i_n]\in\P_n$. Finally we put
\begin{equation}\label{e:22}
A(\gamma)=\sup_{n\in\mathbb N}\quad A_n(\gamma)
\end{equation}
for every $\gamma\in\A(G)$.  For an arc $\gamma\in\A(G)$ and a point $t\in\gamma$ we denote by $\gamma_t,\gamma_{1-t}$ the elements from $\A(G)$ satisfying
 $$\gamma=\gamma_t\cup \gamma_{1-t},~\{t\}=\gamma_t\cap\gamma_{1-t}.$$
For distinct points $x,y\in G$ we denote by $\A_{xy}$ the set of all arcs in $G$ with endpoints $x,y$.

\begin{proposition}\label{p:1}Let $A_n,A$ be the arc-maps defined in (\ref{e:6}), (\ref{e:22}) and $\gamma\in\A(G)$. Then:
\begin{itemize}
\item[(i)] $A_n(\gamma)\leq A_{n+1}(\gamma)$ hence $A(\gamma)=\lim_{n\to\infty}A_n(\gamma)$.
\item[(ii)] If $\gamma$ is not a point then  $A(\gamma)>0$.
\item[(iii)] The arc-map $A$ is additive (on $\gamma$), i.e., for every $t\in \gamma$, $A(\gamma)= A(\gamma_t)+A(\gamma_{1-t})$.
\item[(iv)] For every sequence $(\gamma^m)_m\subset \A(G)$, if $\gamma^m\supset\gamma^{m+1}$ and $\bigcap_{m}\gamma^m=\{x\}$ then $\lim_{m}A(\gamma^m)=0$.
\item[(v)]  For each pair of distinct points $x,y\in G$ there exist an arc $\alpha\in\A_{xy}$ such that
\begin{equation}\label{e:24}A(\alpha)=\inf\{A(\beta)\colon~\beta\in \A_{xy}\}.\end{equation}
\end{itemize}
\end{proposition}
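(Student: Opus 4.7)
The plan is to prove parts (i)--(v) in the order (i), (ii), (iv), (iii), (v), since (iii) will rely on (iv). The driving identity throughout is
\[
\sum_{i_{n+1}:\, m_{i_n i_{n+1}}=1}\Delta([i_0\cdots i_n i_{n+1}]) \;=\; \Delta([i_0\cdots i_n]),
\]
which follows at once from $\lambda_{i_n} v_{i_n}=(Mv)_{i_n}$; summing over $\P_n$ and inducting on $n$ gives the probability normalization $\sum_{I\in\P_n}\Delta(I)=1$. Part (i) is then immediate: every $\P_n$-arc inside $\gamma$ refines into $\P_{n+1}$-arcs still inside $\gamma$ with the same total $\Delta$-mass, so $A_n(\gamma)\le A_{n+1}(\gamma)$, and the supremum of a monotone sequence is its limit. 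For part (ii), given a non-degenerate arc $\gamma$, pick an interior point $x\in\gamma$ of order $2$ in $G$ with $x\notin Q$ (possible since $E(G)\cup B(G)$ has countable closure and $Q$ is countable); near such an $x$, $G$ is locally an arc sitting inside $\gamma$. The nested $\P_n$-arcs $K_n$ containing $x$ in their interior satisfy $\bigcap K_n=\{x\}$, because any other point is separated from $x$ by an element of the dense set $Q$ and hence cannot lie in all $K_n$. Thus $K_n\subset\gamma$ for large $n$, so $A(\gamma)\ge\Delta(K_n)>0$.

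For part (iv), the consistency of the $\Delta$-weights lets us extend (by Kolmogorov/Carath\'eodory) to a Borel probability measure $\mu$ on $G$ with $\mu(I)=\Delta(I)$ on every partition arc. The crucial point is non-atomicity: along any symbolic address $(i_n)$ coding a nested sequence $K_n$ around a point, the ratio $\Delta(K_{n+1})/\Delta(K_n)=v_{i_{n+1}}/(Mv)_{i_n}$ is strictly less than $1$ whenever $i_n$ has a successor different from $i_{n+1}$, and topological mixing (via irreducibility and aperiodicity of $M$) forces this to happen infinitely often, so $\Delta(K_n)\to 0$. Then $\mu$ coincides with $A$ on every arc (the countably many partition endpoints along $\gamma$ are $\mu$-null), and continuity from above for finite measures yields $A(\gamma^m)=\mu(\gamma^m)\downarrow\mu(\{x\})=0$. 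With (iv) in hand, (iii) follows: at level $n$ the $\P_n$-arcs contained in $\gamma$ split into those inside $\gamma_t$, those inside $\gamma_{1-t}$, and at most one ``straddling'' arc $J_n$ whose interior contains $t$ (absent when $t\in P_n$); hence
\[
A_n(\gamma_t)+A_n(\gamma_{1-t}) \;\le\; A_n(\gamma) \;\le\; A_n(\gamma_t)+A_n(\gamma_{1-t})+\Delta(J_n),
\]
and since $J_n\downarrow\{t\}$ by the same separation argument as in (ii) and $\Delta(J_n)=A(J_n)$, (iv) yields $\Delta(J_n)\to 0$, so $A(\gamma)=A(\gamma_t)+A(\gamma_{1-t})$.

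For part (v), take a minimizing sequence $(\alpha_k)\subset\A_{xy}$ with $A(\alpha_k)\downarrow\inf$; by Hausdorff compactness of $2^G$ extract a subsequential limit $K$, a subcontinuum of $G$ (hence itself a tame graph) containing $\{x,y\}$, and select an arc $\alpha\in\A_{xy}$ inside $K$. An approximation argument---comparing $\P_n$-arcs inside $\alpha$ with those inside $\alpha_k$, with discrepancy controlled via (iv)---gives $A(\alpha)\le\liminf_k A(\alpha_k)=\inf A$, hence equality. The principal obstacle is (iv): the measure-theoretic extension is routine, but establishing non-atomicity requires controlling $\prod\lambda_{i_\ell}$ along every symbolic address, which is where topological mixing and the finiteness of the deficient set $\P'$ really enter. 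A secondary difficulty is in (v), where $A$ is not continuous under Hausdorff convergence, forcing an ad hoc approximation to transport the minimizing $A$-value to the limit arc $\alpha$.
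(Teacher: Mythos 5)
Your parts (i) and (ii) are fine, and your overall ordering (proving a decay statement first and deducing additivity from the straddling-arc estimate) matches the paper's logic. The genuine gap is in the decay statement itself, i.e.\ in your non-atomicity claim, which is exactly the paper's limit \eqref{e:14}. You argue: the ratio $\Delta(K_{n+1})/\Delta(K_n)=v_{i_{n+1}}/(Mv)_{i_n}$ is $<1$ whenever $i_n$ has another successor, mixing forces this to happen infinitely often, ``so $\Delta(K_n)\to 0$.'' That last inference is invalid: infinitely many factors strictly less than $1$ need not drive a product to zero (factors like $1-2^{-n}$ do not), and irreducibility only guarantees branching infinitely often, not a contraction factor bounded away from $1$. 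The paper closes this by a dichotomy on the itinerary: if some symbol $j$ recurs infinitely often, then (after finitely many replacements of $j$ by $f(j)$, legitimate because mixing forbids a cycle of partition arcs) $f(j)$ contains at least two partition arcs, and each occurrence of $j$ contributes a factor at most $c=\max_k v_k/(\lambda_j v_j)<1$, a \emph{fixed} constant; if every symbol occurs finitely often, then finiteness of the deficient set $\P'$ gives $\lambda_{i_\ell}=\lambda>1$ for all but finitely many $\ell$, so the denominator in \eqref{e:8} eventually grows geometrically while the numerator is bounded by $1$ (normalization of $v$). Your closing sentence acknowledges that mixing and finiteness of $\P'$ ``really enter'' here, but the argument you actually wrote never uses $\P'$ and does not establish the limit; since your (iii), your (iv), and the identification of your measure $\mu$ with $A$ on arcs (the boundary/straddling arcs must have vanishing mass) all hinge on it, this is a missing idea, not a routine verification.

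Two further remarks. Your route to (iv) via a Kolmogorov-extension measure pushed to $G$ and continuity from above is genuinely different from the paper, which proves (iv) directly by a two-case argument (either some level $\P_n$ is eventually missed up to finitely many arcs of small total mass, or the $\gamma^m$ are eventually trapped in a nested sequence $[i_0\cdots i_n]$ and \eqref{e:14} applies); your route is viable once non-atomicity is proved, and also needs $\mu(Q)=0$ and the well-definedness of the coding map, so it buys no shortcut around \eqref{e:14}. In (v), saying the discrepancy is ``controlled via (iv)'' does not yield the needed lower semicontinuity of $A$ along Hausdorff limits: the point, as in the paper, is to capture all but $\eps$ of $A(\alpha)$ by finitely many $\P_n$-arcs contained in $\alpha^{\circ}$ and avoiding $P_0$, around which $G$ coincides locally with $\alpha$, so that Hausdorff convergence forces $\alpha^m$ to contain them eventually and hence $A(\alpha^m)>A(\alpha)-\eps$; you name this difficulty but do not supply the step.
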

\begin{proof}
In the proof we repeatedly use some topological properties of Peano continua. All of them can be found in \cite{Nad92}.  We will also often apply the conclusions of Lemma \ref{lem:refine}.

As before let us denote $P=G\setminus \bigcup_{i\in\P}i^{\circ}$, $P_n=f^{-n}(P)$, $Q=\bigcup_{n\ge 0}P_n$. Clearly $f^{-1}(Q)=Q$ and by our choice of $\P$ the set $Q$ is countable and dense in $G$. The set $\P_n$ contains the closures of connected components of $G\setminus P_n$. In particular, $\P_0=\P$.   Also (see \cite[(3),~(4)]{BoRo17} for the details)

\begin{equation}\label{welldefinedDelta}
\sum_{j\subseteq f(i_n)}\Delta([i_0\cdots i_n j]) = \Delta([i_0\cdots i_n]),
\end{equation}
and
\begin{equation}\label{expansionDelta}
\Delta([i_1\cdots i_n])=\lambda_{i_0}\cdot\Delta([i_0\cdots i_n]).
\end{equation}

Notice that $P_n\subset P_{n+1}$ and that each $[i_0\cdots i_n]\in\P_n$ is subdivided by the points of $P_{n+1}$ into the arcs $[i_0\cdots i_n j]$, where $j$ ranges over all members of $\P$ contained in $f(i_n)$.   This fact together with (\ref{e:8}) imply that for each $n$,
\begin{equation}\label{e:5}
	\sum_{[i_0\cdots i_n]\in\P_n}\Delta([i_0\cdots i_n])=\sum_{i\in\P}\Delta([i])=\sum_{i\in\P}v_i=1.
\end{equation}

It follows from (\ref{e:6}) and \eqref{welldefinedDelta} that for every $\gamma\in\P_n$,
$A_m(\gamma)=A_n(\gamma)$ whenever $m\ge n$.
Moreover, again from \eqref{welldefinedDelta} we obtain for any arc $\gamma\in\A(G)$,
$$A_n(\gamma)\leq A_{n+1}(\gamma)\text{, and hence }A(\gamma)=\lim_{n\to\infty}A_n(\gamma).$$
It proves the property (i).

Let us verify (ii). Assume that $\gamma\in \A(G)$ is an arc. Since $G$ is a tame graph, $\gamma$ is not nowhere dense and then, because $Q$ is dense in $G$, there is $n\in\mathbb N$ and $[i_0\cdots i_n]\in\P_n$ such that $[i_0\cdots i_n]\subseteq \gamma$. Since by Remark~\ref{r:5} $v_{i_n}>0$, we obtain that \begin{equation*}\label{e:16}A_n(\gamma)>0\text{, and hence also } A(\gamma)>0.\end{equation*}

(iii) We want to verify that the arc-map $A$ is additive on $\gamma$. Clearly $A(\{x\})=0$. Suppose now that $\gamma\in\A(G)$ is nondegenerate  and $t\in \gamma$ is given. Let us distinguish two cases for $t$.

If $t\in Q$ then there is $n\in\mathbb N$ such that $t\in P_n$. Now for $m\geq n$
$$A_m(\gamma)=A_m(\gamma_t)+A_m(\gamma_{1-t}).$$
Hence in the limits for $m\to\infty$ we get the required equality
$$A(\gamma)= A(\gamma_t)+A(\gamma_{1-t}).$$

If $t\notin Q$, then there is for every $n\in\mathbb N$ exactly one arc $[i_0\cdots i_n]\in\P_n$, containing $t$ in the interior. It follows that

$$A_n(\gamma)=A_n(\gamma_t)+A_n(\gamma_{1-t})+\Delta([i_0\cdots i_n])$$
and clearly it is sufficient to show that
\begin{equation}\label{e:14}
\lim_{n\to\infty}\Delta([i_0\cdots i_n])=0.
\end{equation}
Let us call the sequence $i_0 i_1 \cdots$ the \emph{itinerary} of $t$. From (\ref{welldefinedDelta}) it follows that
$$\Delta([i_0\cdots i_n])\ge \Delta([i_0\cdots i_{n+1}])\text{ for each }n.$$

Suppose first that some symbol $j$ occurs infinitely often in the itinerary of $t$.  If $f(j)$ is a single partition arc, then it also occurs infinitely often in the itinerary of $t$, and we may replace $j$ by $f(j)$.  Since the mixing hypothesis does not allow for a cycle of partition arcs, we may conclude (after making finitely many such replacements) that $f(j)$ contains more than one partition arc. Among all the partition arcs contained in $f(j)$ there must exist some $k$ with $\frac{v_k}{\lambda_j v_j}$ maximal; call this ratio $c$ and notice that $c<1$.  We have
\begin{equation*}
\Delta([i_0 \cdots j k]) = \frac{v_k}{\lambda_j v_j} \Delta([i_0 \cdots j])= c\cdot \Delta([i_0 \cdots j]).
\end{equation*}
Therefore our decreasing sequence shrinks by the factor $c$ or better infinitely many times, and hence converges to zero.

Suppose now that each symbol in the itinerary of $t$ occurs only finitely often.  By hypothesis, we have $\lambda_j=\lambda>1$ for all but finitely many arcs $j\in\P$.  Since these arcs can occur only finitely many times in the itinerary, we find that the denominator in the expression for $\Delta([i_0 \cdots i_n])$ in \eqref{e:8} is eventually monotone increasing with respect to $n$, growing by a factor of $\lambda$ at each step.  Moreover, the numerator in this expression is always bounded by $1$.  Therefore the limit (\ref{e:14}) is zero, as desired.

Let us prove the property (iv). Since by (iii) the arc-map $A$ is additive, we can assume that $x$ is an endpoint of every $\gamma^m$. 
Let $\P_n(\gamma^m)$ denote the set of all elements of $\P_n$ nonintersecting $(\gamma^m)^\circ$.
There are two cases. Either there is $n$ such that $\P_n = \bigcup_m \P_n(\gamma^m)$, or for every $n$ there is an arc $\alpha^n \in \P_n \setminus \bigcup_m \P_n(\gamma^m)$.
In the first case, we can miss any finitely many members of $\P_n$ by an arc $\gamma^m$, and so for every $\eps > 0$ there is a finite family $\F \subset \P_n$ such that 
\[
	\sum_{[i_0\cdots i_n] \in \F} \Delta([i_0\cdots i_n]) \geq 1 - \eps
\]
by (\ref{e:5}) and an arc $\gamma^m$ such that $\F \subset \P_n(\gamma^m)$, and so $A_{n'}(\gamma^m) \leq \eps$ for every $n' \geq n$ by (\ref{welldefinedDelta}).
In the second case, $\alpha^n \cap (\gamma^m)^\circ \neq \emptyset$ for every $m$, and hence there is $m_n$ such that $\gamma^m \subset \alpha^n$ for every $m \geq m_n$. It follows that there is an itinerary $i_0 i_1 \cdots$ of $x$ such that $\alpha^n = [i_0\cdots i_n]$. Therefore, $A(\gamma^{m_n}) \leq \Delta([i_0 \cdots i_n])$, and we use (\ref{e:14}).
In both cases we have $\lim_m A(\gamma^m) = 0$.

The property (v) is clear when there are finitely many arcs in $A_{xy}$. Assume to the contrary that for some distinct points $x,y\in G$ and a sequence $(\alpha^m)_{m}\subset \A_{xy}$,
$$\forall~m\colon~A(\alpha^m)>A(\alpha^{m+1})\text{ and }\lim_{m\to\infty}A(\alpha^m)= \inf\{A(\beta)\colon~\beta\in \A_{xy}\}.
$$
It is known that the hyperspace $(C(G),H_d)$, where the Hausdorff metric $H_d$ is induced by the Euclidean metric $d$ in $\mathbb R^3$ restricted to $G$, is compact \cite[Theorem 4.17]{Nad92}. So we can assume that
\begin{equation}\label{e:20}\lim_mH_d(\alpha^m,C)=0\end{equation} for some subcontinuum $C$ of $G$. Clearly $x,y\in C$, and $C$ contains some $\alpha\in \A_{xy}$ since $G$ is hereditarily locally connected. In what follows we show that the arc $\alpha$ satisfies (\ref{e:24}).

From Proposition \ref{p:1}(ii) we have $A(\alpha)>0$. Fix an $\eps>0$. For an arc $\delta$ let $\delta(uv)$ denote the unique subarc of $\delta$ with endpoints $u,v\in\delta$ and for $n>0$  put
$$S(\delta,u,v,n)=\{[i_0\cdots i_n]\in\P_n\colon~[i_0\cdots i_n]\subseteq \delta(uv),~[i_0\cdots i_n]\cap P_0=\emptyset\}.$$
By the definition of the arc-map $A$ there is $n\in\bbn$ such that $A_n(\alpha)>A(\alpha)-\eps$, so using (\ref{e:6}) and a larger $n$ (if necessary) we can consider points $u,v\in\alpha^{\circ}$ such that for some finite subset $S'\subset S(\alpha,u,v,n)$
\begin{equation}\label{e:23}
\sum_{[i_0\cdots i_n]\in S'}\Delta([i_0\cdots i_n])>A(\alpha)-\eps.
\end{equation}
Since the set $\bigcup S'$ is a compact subset of $\alpha^{\circ}$ nonintersecting $P_0$, for each $x\in\bigcup S'$ there exists a open neighbourhood $U(x)$ for which
$$
\overline{U(x)}\cap P_0=\emptyset,~U(x)\cap\alpha=U(x)\cap G\subset\alpha^{\circ};
$$
then it follows from (\ref{e:20}) that in fact for each $U(x)$ there exists $m_x$ such that for each $m>m_x$
$$
U(x)\cap\alpha=U(x)\cap\alpha^m=U(x)\cap G;
$$
Since $\{U(x)\colon~x\in \bigcup S'\}$ is an open cover of the compact set $\bigcup S'$ we can consider its finite subcover $\{U(x_1),\dots,U(x_k)\}$ and put $m_1=\max_{1\le i\le k}m_{x_i}$. By the previous, for each $m>m_1$ we obtain
$\alpha^m\supset\bigcup S'$, and hence by (\ref{e:22}) and (\ref{e:23})
$$A(\alpha^m)\ge A_n(\alpha^m)>A(\alpha)-\eps.$$
Since the epsilon was arbitrary, it proves the conclusion of the claim.
\end{proof}

Now that we have our arc map $A$ and its basic properties, we show that it behaves well with respect to our map $f$.


\begin{theorem}\label{t:1}Let $f\in\CPM(G)$ with a partition $\P$. Suppose there is a summable $\lambda$-subeigenvector $v=(v_i)_{i\in\P}$ which is deficient in only finitely many coordinates from the set $\P'\subset \P$. Consider the arc-map $A$ defined in (\ref{e:22}). Then:
\begin{itemize}
\item[(i)] If an arc $\alpha$ is contained in a single partition arc, $\alpha \subset i_0\in\P$, then $A(f(\alpha))=\lambda_{i_0}A(\alpha)$.
\item[(ii)] If an arc $\alpha$ has empty intersection with $i^{\circ}$ for each $i\in\P'$ and if $f\vert\alpha$ is monotone, then $A(f(\alpha))=\lambda A(\alpha)$.
\end{itemize}
There is also a metric $\varrho \colon G\times G\to [0,1]$ compatible with the topology $\tau$ such that:
\begin{itemize}
\item[(iii)] For each $x,y\in G$, $\varrho(f(x),f(y))\le\lambda\varrho(x,y)$.
\end{itemize}
\end{theorem}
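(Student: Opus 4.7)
For (i), note that $\alpha \subset i_0$ makes $f|_\alpha$ a restriction of the homeomorphism $f|_{i_0}$. By Lemma~\ref{lem:refine}~\ref{it:hom}, $f$ carries each partition arc $[i_0 i_1 \cdots i_n] \in \P_n$ contained in $\alpha$ onto $[i_1 \cdots i_n] \in \P_{n-1}$, and conversely every $[j_0 \cdots j_{n-1}] \in \P_{n-1}$ contained in $f(\alpha)$ pulls back to $[i_0 j_0 \cdots j_{n-1}] \in \P_n$ contained in $\alpha$. Combined with the expansion identity~(\ref{expansionDelta}), this yields $A_{n-1}(f(\alpha)) = \lambda_{i_0} A_n(\alpha)$, and passing to the limit via Proposition~\ref{p:1}~(i) proves (i). For (ii), I would decompose $\alpha$ along the (at most countably many) points of $P_0 \cap \alpha^\circ$ into maximal sub-arcs $\alpha_k$, each lying in a single partition arc $i_k \in \P$. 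The hypothesis $\alpha \cap i^\circ = \emptyset$ for every $i \in \P'$ forces each $i_k \notin \P'$, so $\lambda_{i_k} = \lambda$ and (i) gives $A(f(\alpha_k)) = \lambda A(\alpha_k)$. It remains to sum. Formula~(\ref{e:6}) shows that each $\P_n$-arc inside $\alpha$ belongs to a unique $\alpha_k$, so $A_n(\alpha) = \sum_k A_n(\alpha_k)$; monotone convergence in $n$ (via Proposition~\ref{p:1}~(i)) upgrades this to the countable additivity $A(\alpha) = \sum_k A(\alpha_k)$. Monotonicity of $f|_\alpha$ yields the analogous identity for $f(\alpha)$, and (ii) follows.

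For (iii), I would set $\varrho(x,y) := \inf\{A(\beta) : \beta \in \A_{xy}\}$ for $x \ne y$ and $\varrho(x,x) := 0$. The normalization $\sum v_i = 1$ together with~(\ref{e:5}) forces $A \le 1$ on arcs, so $\varrho$ takes values in $[0,1]$. Positivity for distinct points combines Proposition~\ref{p:1}~(ii) with the attainment in Proposition~\ref{p:1}~(v). For the triangle inequality, I pick minimizers $\alpha_{xy}, \alpha_{yz}$ and extract from their union an arc $\beta$ from $x$ to $z$ (using hereditary local connectedness of $G$); $\beta$ splits into at most countably many maximal sub-arcs alternating between $\alpha_{xy}$ and $\alpha_{yz}$, and the countable additivity of $A$ on each side bounds $A(\beta)$ by $A(\alpha_{xy}) + A(\alpha_{yz})$. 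For the contraction $\varrho(f(x), f(y)) \le \lambda \varrho(x,y)$, I would take a minimizer $\alpha \in \A_{xy}$, subdivide it along $P_0 \cap \alpha^\circ$ into sub-arcs $\alpha_j \subset i_j \in \P$, use (i) to get $A(f(\alpha_j)) \le \lambda A(\alpha_j)$, and extract an arc from $f(x)$ to $f(y)$ inside $\bigcup_j f(\alpha_j)$ by the same union-of-arcs argument.

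What remains, and what I expect to be the most delicate step, is verifying that $\varrho$ induces the topology $\tau$. For ``$d$-small implies $\varrho$-small'', summability of $v$ lets me fix a finite $F \subset \P$ with $\sum_{i \notin F} v_i < \eps/2$; for $y$ close to $x$ one can produce a short arc from $x$ to $y$ that meets partition arcs from $F$ only near its endpoints, where Proposition~\ref{p:1}~(iv) applied on each of those finitely many arcs keeps the remaining $A$-contribution small. For the reverse direction, if $\varrho(x_n, x) \to 0$ while $d(x_n, x) \not\to 0$ along some subsequence, the minimizing arcs have a Hausdorff subsequential limit $C$ containing a nondegenerate arc from $x$, and the lower-semicontinuity argument from the proof of Proposition~\ref{p:1}~(v) forces $\liminf A(\alpha_n) > 0$, contradicting $\varrho(x_n, x) \to 0$. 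The recurring technical difficulty in all three parts is handling the infinite character of tame graphs, both in countably summing $A$ over partition pieces and in controlling small arcs around points of $G$ that may fail to admit a finite-graph neighborhood.
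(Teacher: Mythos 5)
Parts (i) and (ii) of your argument are sound, and for (i) your level-by-level bijection between the $\P_n$-arcs inside $\alpha$ and the $\P_{n-1}$-arcs inside $f(\alpha)$ (giving $A_{n-1}(f(\alpha))=\lambda_{i_0}A_n(\alpha)$ for every $n$) is even a little cleaner than the paper, which first treats arcs with endpoints in $Q$ and then extends by density using Proposition~\ref{p:1}(iii),(iv). Your treatment of (ii) — cutting along $P_0\cap\alpha^\circ$ and summing — is what the paper intends; note that both versions tacitly use $f(P_0)\subset P_0$ so that the cut points of $f(\alpha)$ lie in every $P_n$ and the $\P_n$-arcs of $f(\alpha)$ do not straddle them. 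Your definition of $\varrho$ and the compatibility sketch are workable, though heavier than needed: the paper proves only continuity of the identity $(G,\tau)\to(G,\varrho)$ and gets the inverse for free from compactness of $(G,\tau)$ and Hausdorffness of $(G,\varrho)$, whereas you argue both directions, the forward one via summability (finite $F\subset\P$) plus Proposition~\ref{p:1}(iv) instead of the paper's uniform estimate (\ref{e:15}).

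The genuine gap is in the two places where you sum $A$ over an arc extracted from a union of arcs. For the triangle inequality, an arc $\gamma\subset\alpha_{xy}\cup\alpha_{yz}$ need not split into countably many maximal subarcs each lying in one of the two arcs: $\gamma\cap\alpha_{xy}$ can have uncountably many components, and a $\P_n$-arc contained in $\gamma$ can straddle a point where $\gamma$ passes from one arc to the other, so $A_n(\gamma)\le A_n(\alpha_{xy})+A_n(\alpha_{yz})$ does not follow term by term. This is easily repaired by the paper's construction: take the unique $t\in\alpha_{xy}$ with $\alpha_{xy}\cap\alpha_{yz}(tz)=\{t\}$ and apply finite additivity of $A$ at $t$. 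The contraction estimate is the more serious problem: $f$ need not be injective on the minimizer $\alpha$, so the countably many images $f(\alpha_j)$ may overlap one another arbitrarily, and ``extract an arc from $f(x)$ to $f(y)$ inside $\bigcup_j f(\alpha_j)$ by the same union-of-arcs argument'' provides no mechanism for bounding the $A$-value of the extracted arc by $\sum_j\lambda A(\alpha_j)$; the straddling/overlap difficulty now occurs with infinitely many, possibly accumulating, pieces. The paper circumvents this by proving compatibility of $\varrho$ first and then establishing $\varrho(f(x),f(y))\le\lambda\varrho(x,y)$ only for $x,y\in Q$: if $x,y\in P_n$, a minimizer $\alpha$ is a union of full $\P_n$-arcs, every $\P_{n-1}$-arc contained in an arc $\beta\subset f(\alpha)$ joining $f(x),f(y)$ is the image of a $\P_n$-arc of $\alpha$, and the estimate (\ref{expansion1}) follows; the general case is then obtained from density of $Q$ and continuity. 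You would need to adopt this reduction (or supply a genuinely new summation argument for the overlapping images) to close part (iii).
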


\begin{proof}
To verify the property (i), suppose first that the endpoints of $\alpha$ are two points $x,y\in Q$.  Take $n$ minimal so that $x,y\in P_n$.  Notice that $f$ induces a bijective correspondence between the set of arcs $[i_0\cdots i_n]\in\P_n$ contained in the arc $\alpha$ and the set of arcs $[i_1\cdots i_n]\in\P_{n-1}$ contained in the arc $f(\alpha)$.  Using \eqref{expansionDelta} and taking sums, we find
\begin{align}\label{expansion}
	A(f(\alpha)) = \sum_{\substack{[i_1\cdots i_n]\subseteq f(\alpha)}} \Delta([i_1\cdots i_n])
		= \sum_{\substack{[i_0\cdots i_n]\subseteq \alpha}} \lambda_{i_0} \Delta([i_0\cdots i_n])
		= \lambda_{i_0} A(\alpha)
\end{align}
Since $Q$ is dense, the conclusion is true by Proposition \ref{p:1}(iii),(iv).

The property (ii) is a direct consequence of (i), the fact that $P_0$ is countable and $f$ is continuous.

Let us prove (iii). Using Proposition \ref{p:1}(v) let us define the map $\varrho\maps G\times G\to [0,1]$ by $\varrho(x,x)=0$, $x\in G$ and for $x\neq y$,
\begin{equation}\label{e:25}\varrho(x,y)=\min\{A(\alpha)\colon~\alpha\in \A_{xy}\}.
\end{equation}
So $\varrho(x,x)=0$, $\varrho(x,y)>0$ and $\varrho(x,y)=\varrho(y,x)$ for each distinct $x,y\in G$.

As before, for an arc $\delta$ we let $\delta(uv)$ denote the unique subarc of $\delta$ with endpoints $u,v\in\delta$. In order to show that $\varrho$ satisfies the triangle inequality, let $x, y, z\in G$ be arbitrary. There are arcs $\alpha,\beta,\gamma\in \A(G)$ with endpoints $x,y$; $y,z$; $x,z$ respectively and such that
 $$\varrho(x,y)=A(\alpha),~\varrho(y,z)=A(\beta),~\varrho(x,z)=A(\gamma).
 $$
 If $z\in \alpha$ then since $\alpha(xz)\subset \alpha$ we can write
  \begin{align*}\varrho(x,z)\le A(\alpha(xz))\le A(\alpha)=\varrho(x,y)\le \varrho(x,y)+\varrho(y,z).\end{align*}
  Assume that $z\notin\alpha$. There exists the unique point $t\in\alpha$ such that $\alpha\cap \beta(tz)=\{t\}$. Since $\alpha(xt)\cup\beta(tz)$ is an arc from $\A_{xz}$, using the additivity of $A$ we can write
  \begin{align*}
  &\varrho(x,z)=A(\gamma)\le A(\alpha(xt)\cup\beta(tz))=A(\alpha(xt))+A(\beta(zt))\le\\
  \le &A(\alpha)+A(\beta)=\varrho(x,y)+\varrho(y,z).
  \end{align*}

Thus $\varrho$ is a metric. Let us show that
\begin{equation}\label{e:15}
\lim_{n\to\infty}\max\{A(\alpha)\colon~\alpha\in\P_n\}=0.\end{equation}

We know that for each $\alpha\in\P_n$, $\alpha=[i_0\cdots i_n]$ and by (\ref{welldefinedDelta}) $A(\alpha)=A_n(\alpha)=\Delta([i_0\cdots i_n])$. So by (\ref{e:5}) every maximum in (\ref{e:15}) is less than $1$. Moreover, every element of $\P_{n+1}$ is contained in some element of $\P_n$, so
$$\max\{A(\alpha)\colon~\alpha\in\P_n\}\ge\max\{A(\alpha)\colon~\alpha\in\P_{n+1}\},$$ hence the limit in (\ref{e:15}) exists.
Assume to the contrary that it equals to some $\eps>0$.
Let $\T$ be the tree $\{\alpha \in \bigcup_n \P_n\colon~A(\alpha) \geq \eps\}$ ordered by inclusion.
By our assumption $\T$ is infinite, but by (\ref{e:5}) every level $\T \cap \P_n$ is finite.
Hence, $\T$ has an infinite branch $[i_0] \supset [i_0 i_1] \supset \cdots$, so $A([i_0\cdots i_n])=\Delta([i_0\cdots i_n])\ge\eps$ for each $n$. This contradicts (\ref{e:14}), and hence the equality (\ref{e:15}) is proved.

We need to verify that the metric $\varrho$ is compatible with the original topology $\tau$ on $G$.
This means to verify that the identity $(G,\tau) \to (G,\varrho)$ is a homeomorphism. Since $(G,\tau)$ is compact and $(G,\varrho)$ is clearly Hausdorff, it is enough to show that the identity is continuous. Thus suppose that we have a sequence $x_n$ converging to $x$ in the topology of $G$. We need to show that it converges also in the metric $\varrho$. Fix $\varepsilon>0$. We want to find $n_0$ such that $\varrho(x,x_n)<\varepsilon$ for $n>n_0$. By (\ref{e:15}) there is a $n_1\in\mathbb N$ such that $A(\alpha)< \varepsilon$ for each $\alpha\in\P_{n_1}$.
Let us distinguish several cases.

Suppose first that $x\notin P_{n_1}$. Consider the only element $\alpha\in\mathcal \P_{n_1}$ which contains $x$ in the interior (with respect to the topology $\tau$).
 Since $\alpha$ is a neighbourhood of $x$ there is $n_0$ such that $x_n\in \alpha$ for $n>n_0$. We have shown in the previous that the arc-map $A$ is additive. It implies that
 $$\varrho(x,x_n)\leq A(\alpha)<\varepsilon$$
  for $n> n_0$.

Second, suppose that $x\in P_{n_1}$ and let us distinguish two cases.
Suppose first that the point $x$ is isolated in $P_{n_1}$. Since the set $P_0$ as well as $P_{n_1}$ contains all the ramification points it follows from Remark \ref{r:6} that $x$ is a point of finite order $k$. Thus there are pairwise distinct $\alpha_1,\dots, \alpha_k\in \mathcal \P_{n_1}$ containing $x$. Since the set $\alpha_1\cup\dots\cup \alpha_k$ forms a neighbourhood of $x$ it follows that $x_n\in \alpha_1\cup\dots\cup \alpha_k$ for $n$ greater than some $n_0$. By the convexity of the arc-map $A$ it follows that
$$\varrho(x,x_n)\leq \max\{A(\alpha_1),\dots,A(\alpha_k)\}<\varepsilon$$
 for $n>n_0$.

In the remaining case $x$ is a limit point of $P_{n_1}$.
Since
$$\sum_{\alpha\in\P_{n_1}} A(\alpha)=\sum_{[i_0\cdots i_{n_1}]\in\P_{n_1}}\Delta([i_0\cdots i_{n_1}])=1,$$
there is a finite set $\mathcal F\subseteq \P_{n_1}$ such that
$$\sum_{\alpha\in\P_{n_1}\setminus \mathcal F}A(\alpha)<\varepsilon.$$
Let $\mathcal G=\{\alpha\in\mathcal F: x\in \alpha\}$.
The set
$$\bigcup \mathcal G\cup \bigcup (\P_{n_1}\setminus \mathcal F)\supseteq G\setminus \bigcup(\mathcal F\setminus \mathcal G)$$
 forms a neighbourhood of $x$. Thus there is $n_0$ such that $x_n$ is in this neighbourhood for $n > n_0$. It follows that for such $n$
$$\varrho(x,x_n)\leq \max\{A(\alpha)\colon~\alpha\in\mathcal G\}<\varepsilon$$
if $x_n\in\bigcup \mathcal G$ or
$$\varrho(x,x_n)\leq \sum_{\alpha\in \P_{n_1}\setminus\mathcal F} A(\alpha)<\varepsilon$$
 otherwise.
In any case $\varrho(x,x_n)<\varepsilon$ for $n>n_0$.
Thus the metric $\varrho$ is compatible with the original topology $\tau$ on $G$.

Let $x,y\in Q\cap G$. Let $\alpha\in\A_{xy}$ satisfy $\varrho(x,y)=A(\alpha)$ and $\beta\in\A_{f(x)f(y)}$.
Since each vertex of $G$ belongs to $P_0$ and $f(P_{n})\subset P_{n-1}\subset P_{n}$, every element of the set $\P(\beta,n-1)$ of arcs $[i_1\cdots i_n]\in\P_{n-1}$ contained in the arc $\beta$ has its preimages in the set $\P(\alpha,n)$ of arcs $[i_0\cdots i_n]\in\P_n$ contained in the arc $\alpha$; using (\ref{expansion}) and the definition of $\varrho$ we can write for each $n$
\begin{align}\label{expansion1} &\varrho(f(x),f(y))\le A(\beta)=\sum_{[i_1\cdots i_n]\in\P(\beta,n)}\Delta([i_1\cdots i_n])\le \\ \le &\sum_{[i_0\cdots i_n]\in\P(\alpha,n-1)} \lambda\Delta([i_0\cdots i_n]) \le \lambda A(\alpha)=\lambda\varrho(x,y).
\end{align}
Since $Q$ is dense in $G$ and $f$ is continuous, the inequality $\varrho(f(x),f(y))\le\lambda\varrho(x,y)$ holds true for every pair $x,y\in G$.
\end{proof}

\subsection{Countably affine graphs}

Now we construct the countably affine graph $G'$. As before we have $f\in\CPM(G)$ with partition $\P$, a summable $\lambda$-subeigenvector $v$ with finite deficiency, and the arc map $A$ defined in~\eqref{e:22}. Then we have:

\begin{theorem}\label{t:graph}
There exists a countably affine graph $G'\subset \mathbb{R}^3$ and a homeomorphism $\phi\maps G\to G'$ such that $\ell(\phi(\alpha))=A(\alpha)$ for all arcs $\alpha\in\A(G)$.
\end{theorem}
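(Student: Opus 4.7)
The plan is to construct $G'$ one partition arc at a time, realizing each $i \in \P$ as a polygonal path in $\mathbb{R}^3$ of Euclidean length exactly $v_i = A(i)$, glued at shared endpoints according to the incidence structure of $\P$ in $G$. Because each $\phi(i)$ is a finite union of line segments, $G'$ will admit a tame partition into line segments, making it countably affine as required. The map $\phi$ is defined on each partition arc $i$ as the unique parametrization matching $A$-arc-length along $i$ with Euclidean arc-length along $\phi(i)$; these pieces glue continuously along the countable set $F$ of endpoints of arcs of $\P$ (which in particular contains $E(G) \cup B(G)$ and its countable closure).

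The embedding into $\mathbb{R}^3$ proceeds inductively. Enumerate $\P = \{i_n\}_{n\in\bbn}$ and $F = \{x_n\}_{n\in\bbn}$. At each stage we extend a partial placement $\psi\maps F_n \to \mathbb{R}^3$ for a finite $F_n \subset F$ and draw polygonal paths for a finite subfamily of $\P$. The placement of a new point $\psi(x_n)$ is constrained by two conditions: (a) for every arc $i \in \P$ with $x_n$ as an endpoint and the other endpoint already placed as $\psi(x_m)$, we require $|\psi(x_n) - \psi(x_m)| \leq v_i$, so that a polygonal path of length $v_i$ can later be drawn between them; (b) if $x_n$ is a limit of already-placed points in $G$, then $\psi(x_n)$ must be placed as their limit in $\mathbb{R}^3$, thereby faithfully reproducing the accumulation structure of $F$. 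Both conditions are satisfiable provided we enumerate thoughtfully and exploit the summability $\sum_{i \in \P} v_i < \infty$: we cluster small-$v_i$ arcs near their common endpoints in $\mathbb{R}^3$, mirroring their clustering in $G$. Once the endpoints $\psi(p), \psi(q)$ of an arc $i$ are placed with $|\psi(p) - \psi(q)| \leq v_i$, the polygonal path for $i$ is drawn as a two-segment path via any point on the ellipsoid with foci $\psi(p), \psi(q)$ and focal sum $v_i$; the $2$-sphere of midpoint choices lets us avoid the finite collection of previously drawn segments except at shared endpoints, by a general-position argument in $\mathbb{R}^3$.

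We then verify four items. First, $\phi$ is a bijection onto $G' := \bigcup_{i\in\P} \phi(i)$ by construction. Second, $\phi$ is a homeomorphism: on the interior of each partition arc it is a length-parametrization, hence continuous; at points of $F$, continuity follows from the fact that $\ell(\phi(i)) = v_i \to 0$ for far-indexed $i$ (by summability), combined with the topology-preserving placement of $\psi$. Third, $G'$ is a tame graph, with tame partition given by the countably many line segments composing the polygonal paths $\phi(i)$, and with $E(G') \cup B(G')$ a countable set whose closure in $\mathbb{R}^3$ is countable, inherited from the corresponding structure in $G$. Fourth, the length identity $\ell(\phi(\alpha)) = A(\alpha)$ holds on each partition arc by construction, and extends to arbitrary $\alpha \in \A(G)$ via the additivity of both $A$ (Proposition \ref{p:1}(iii)) and Euclidean arc-length, together with Proposition \ref{p:1}(iv) to control the contribution from the dense skeleton.

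The main obstacle is the careful inductive bookkeeping of the embedding. We must simultaneously ensure that (i) Euclidean distances between $\psi$-images of shared endpoints never exceed the corresponding $v_i$, (ii) the accumulation structure of $F \subset G$ is faithfully reproduced by $\psi(F) \subset \mathbb{R}^3$, and (iii) no unintended self-intersections occur among the polygonal paths. The summability of the subeigenvector $v$ provides the quantitative control for (i) and (ii), forcing small arcs and their endpoints to cluster tightly; three-dimensional ambient space provides the geometric room for (iii) via generic-position choices.
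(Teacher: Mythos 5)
Your overall plan is the same as the paper's (realize each partition arc as a polygonal path of Euclidean length $A(i)=v_i$, glue at images of the arc endpoints, and define $\phi$ by matching $A$-length with Euclidean arc length), but the step on which everything hinges is asserted rather than proved. The whole difficulty of the theorem is to place the countable closed set $F=P$ of partition endpoints into $\bbr^3$ so that the placement is a topological embedding, i.e.\ so that the accumulation structure of $F$ (which can be a countable compactum of arbitrarily high Cantor--Bendixson rank) is exactly reproduced, with no spurious accumulation and with all limits respected, and simultaneously so that the images of the two endpoints of each arc $i$ are at distance at most $A(i)$. In your finite-stage induction, condition (b) is vacuous at every stage: a finite set of already-placed points has no limit points, so no $x_n$ is ever ``a limit of already-placed points'' at the moment you place it. Hence nothing in the scheme forces the limiting map $\psi\maps F\to\bbr^3$ to be continuous on $F$, nothing prevents distinct points of $F$ from being sent to configurations that accumulate wrongly, and nothing guarantees that $\overline{\bigcup_i\phi(i)}$ adds only images of points of $F$ (which is what makes $G'$ compact, tame, and $\phi$ surjective onto it). ``Enumerate thoughtfully and exploit summability'' is precisely the missing argument: summability of $v$ alone does not control how far apart in $\bbr^3$ you have placed two points of $F$ that are close in $G$ but are not endpoints of a common partition arc.

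The paper resolves exactly this point by first building the compatible metric $\varrho$ of Theorem~\ref{t:1}(iii) (so that $\varrho(x_i,y_i)\le A(i)$ for the endpoints of each arc) and then embedding the compact countable set $(P,\varrho)$ into the real line by the map $\pi_r(x)=\sum_k r_k\,\varrho(x,x_k)$ for a Baire-generic parameter $r$: the estimate $\vert\pi_r(x)-\pi_r(y)\vert\le\varrho(x,y)/2$ gives continuity (hence, by compactness, a homeomorphism onto the image) and at the same time the inequality $\vert\pi_r(x_i)-\pi_r(y_i)\vert<A(i)$ needed to hang a zig-zag of length $A(i)$ in a plane of a convergent sheaf through $\bbr$. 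This single global formula replaces all of your inductive bookkeeping for (i) and (ii), and the sheaf of planes replaces your general-position choices for (iii). To repair your argument you would have to prove that a placement of $F$ exists satisfying a global Lipschitz-type compatibility such as $\vert\psi(x)-\psi(y)\vert\le\varrho(x,y)$ (or at least $\le A(\alpha)$ for some arc $\alpha$ joining $x,y$), which essentially amounts to reconstructing the paper's metric $\varrho$ and its embedding; as written, the proposal has a genuine gap at this step, while the remaining verifications (length identity via additivity of $A$, continuity inside arcs, tameness of the union of segments) are fine modulo it.
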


\begin{proof}
Consider the set $R=\prod_{k\ge 1}[0,1/2^{k+1}]$ equipped with the supremum metric $\tilde d$. Clearly, the metric $\tilde d$ is compatible with the  Tychonoff product topology on $R$ and $(R,\tilde d)$ is a compact metric space. Let $(X,m)$ be a countable metric space with the metric $m$, to avoid trivial cases and to simplify the notation assume that $X=\{x_n\colon~n\ge 1\}$ is infinite and $\dia_m(X)\le 1$.

Using the metric $m$ and an element $r=(r_k)_{k\ge 1}\in R$ we can define the map
$$\pi_r\maps X\to\bbr,~ \pi_r(x_n)=\sum_{k\ge 1}r_k~m(x_n,x_k).$$

The following facts are easy to verify and we leave their proof to the reader.

\begin{itemize}
\item[($+$)] \label{l:1} For each $r\in R$ and $m,n\in\bbn$, $$\vert\pi_r(x_m)-\pi_r(x_n)\vert\le\frac{m(x_m,x_n)}{2}.$$
In particular, the map $\pi_r\maps (X,m)\to (\bbr,\vert\cdot\vert)$ is continuous.
\item[($++$)] \label{p:3} Let $G_n\subset R$, $n\in\bbn$, be defined as
$$G_n=\{r=(r_k)_{k\ge 1}\in R\colon~\pi_{r}(x_i)\neq \pi_{r}(x_j)\text{ for each }1\le i<j\le n\}.$$ (i) For each $n$, the set $G_n$ is open and dense in $(R,\tilde d)$.
~(ii) For every $r\in G=\bigcap_{n\ge 1}G_n$, the map $\pi_r$ is injective.
\end{itemize}

We assume that $f\in\CPM(G)$ with a partition $\P$. In particular, the $\tau$-closed countable set $P=P_0$ -- see before Remark~\ref{r:6} -- contains all vertices and endpoints of $G$ and intersects any simple closed curve in G in at least two points.  Due to Remark \ref{r:4} we can assume that $P$ is countably infinite. We want to apply ($+$) and ($++$); to this end put $$X=P\text{ and }m=\varrho,$$
where the metric $\varrho$ on $G$ was guaranteed by Theorem \ref{t:1}(iii).  We know that the metric $\varrho$ is compatible with the original topology $\tau$ on $G$, so the space $(P,\varrho)$ is a countable compact metric space. Using ($++$), fix an $r\in R$ for which the map $\pi_r$ is injective. Since by ($+$) the map $\pi_r$ is continuous, it is in fact a homeomorphism of $P$ onto its image $\pi_r(P)$. In order to construct a needed graph $G'$, we will use a sheaf of planes $\Sigma=\{\sigma_i\colon~i\in\P\}$ in $\bbr^3$ containing the real line $\bbr$. To make our construction as simple as possible, we will assume that $\Sigma$ is convergent, i.e., $\lim_{i\in\P}n_i=n$, where each $n_i$ is a normal vector of $\sigma_i$ and $n$ is a nonzero vector in $\bbr^3$.

By the definition of the partition $\P$ each $i\in\P$ is an element of some $\A_{x_iy_i}(G)$, $x_i,y_i\in P$. Then (\ref{e:25}) and ($+$) imply
$$A(i)\ge \varrho(x_i,y_i)>\frac{\varrho(x_i,y_i)}{2}\ge \vert \pi_r(x_i)-\pi_r(y_i)\vert,$$
so we can join the points $\pi_r(x_i),\pi_r(y_i)\in\bbr$ by a zig-zag $j_i$ (a piecewise affine curve consisting of finitely many segments) of length $A(i)$ and placed to the plane $\sigma_i$. We know from (\ref{e:5}) that $\lim_{i\in\P}A(i)=0$, hence by the previous also
$$
\lim_{i\in\P}\vert\pi_r(x_i)-\pi_r(y_i)\vert=0.
$$
It means that
$$\overline{\bigcup_{i\in\P}j_i}\setminus \bigcup_{i\in\P}j_i=\pi_r(P\setminus\bigcup_{i\in \P}i)\subset \pi_r(P);$$ it follows that the set
$$
G'=\overline{\bigcup_{i\in\P}j_i},
$$
is a countably affine graph in $\bbr^3$. Let us define the map
$\phi\maps G\to G'$ by
\begin{equation*}\phi(z)=\begin{cases}
\pi_r(z)~\text{ if }z\in P,\\
u\in j_i~\text{ satisfying }\ell(j_i(\pi_r(x_i)u))=A(i(x_iz))\text{ if }i\in\P\text{ and }z\in i.
\end{cases}
\end{equation*}
By the above definition, $\phi$ is a homeomorphism and for each arc $\alpha\in\A(G)$,  $\phi(\alpha)\in\A(G')$ and $A(\alpha)=\ell(\phi(\alpha))$.
\end{proof}

\subsection{Conclusion of the proof}

Having used our arc map to construct a suitable countably affine graph, we are ready to finish the construction of the conjugate map of constant (bounded) slope.

\begin{proof}[Proof of Theorem~\ref{t:main}~(ii) and Theorem~\ref{t:broad}~(ii).]
We continue to use the arc map $A$ and the homeomorphism $\phi\maps G\to G'$ constructed above.
Put $g=\phi\circ f\circ\phi^{-1}$. It follows immediately from Theorem~\ref{t:1} and Theorem~\ref{t:graph} that $g$ is piecewise affine with slope $\lambda_i$ on each piece of the zig-zag $\phi(i)\subset G'$, $i\in\P$. Thus $g$ is the desired conjugate map of bounded slope $\lambda$, and if $v$ is a $\lambda$-eigenvector (no deficiency), then $g$ has constant slope $\lambda$.
\end{proof}

In fact, we have proved something slightly stronger, which turns out to be useful in Section~\ref{s:lipschitz}.
Consider the map $L\maps G'\times G'\to\bbr$ defined by
$$L(x,y)=\min\{\ell(\alpha)\colon~\alpha\in\A_{xy}(G')\}.$$
From Theorems~\ref{t:1} and~\ref{t:graph} we have $L(\phi(x),\phi(y))=\rho(x,y)$ for all $x,y\in G$. This implies:

\begin{corollary}\label{c:isometry}
The function $L$ is a metric on $G'$ compatible with its topology and the homeomorphism $\phi\maps (G,\rho)\to(G',L)$ is an isometry.
\end{corollary}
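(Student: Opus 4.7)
The plan is to derive everything from the key identity $L(\phi(x),\phi(y)) = \varrho(x,y)$ stated just before the corollary. The first order of business is to ensure $L$ is well defined as a minimum rather than merely an infimum. For distinct $a, b \in G'$, any arc $\alpha' \in \A_{ab}(G')$ corresponds via $\phi^{-1}$ to an arc $\phi^{-1}(\alpha') \in \A_{\phi^{-1}(a)\phi^{-1}(b)}(G)$ with $\ell(\alpha') = A(\phi^{-1}(\alpha'))$. So Proposition~\ref{p:1}(v) guarantees the existence of a minimizer for $A$, and applying $\phi$ to this minimizer produces an arc attaining the minimum of $\ell$ in $\A_{ab}(G')$. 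Thus $L$ is well defined and the identity $L(\phi(x),\phi(y)) = \varrho(x,y)$ follows by the bijection $\alpha \leftrightarrow \phi(\alpha)$ between $\A_{xy}(G)$ and $\A_{\phi(x)\phi(y)}(G')$ together with $\ell(\phi(\alpha)) = A(\alpha)$.

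Next I would transport the metric axioms. Since $\phi$ is a bijection of $G$ onto $G'$ and the identity above says $\phi$ intertwines $\varrho$ and $L$, every metric property of $\varrho$ on $G$ (established in the proof of Theorem~\ref{t:1}(iii)) transfers verbatim to $L$ on $G'$: symmetry, nonnegativity, vanishing exactly on the diagonal, and the triangle inequality. In particular, this already records that $\phi\maps (G,\varrho)\to(G',L)$ is an isometry, since an isometry is precisely a bijection preserving the distance function.

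It remains to show that $L$ is compatible with the topology that $G'$ inherits as a subspace of $\bbr^3$. By Theorem~\ref{t:1}(iii) the metric $\varrho$ is compatible with the original topology $\tau$ on $G$, so the identity map $(G,\tau) \to (G,\varrho)$ is a homeomorphism. Combined with the topological homeomorphism $\phi\maps (G,\tau) \to G'$ constructed in Theorem~\ref{t:graph} and the isometry $\phi\maps (G,\varrho)\to(G',L)$, we get that the identity $G' \to (G',L)$ is a homeomorphism, which is exactly compatibility. No part of the argument is genuinely difficult; the only point that requires care is the bijection between arcs in $G$ and arcs in $G'$ under $\phi$, which is immediate from $\phi$ being a homeomorphism.
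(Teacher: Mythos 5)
Your proposal is correct and follows essentially the same route as the paper, which simply records the identity $L(\phi(x),\phi(y))=\varrho(x,y)$ (from Theorems~\ref{t:1} and~\ref{t:graph}) and notes that the corollary follows; your write-up just makes explicit the transfer of the minimizer via Proposition~\ref{p:1}(v), the metric axioms, and the compatibility of $L$ through the chain of homeomorphisms. Nothing is missing.
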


\subsection{Discussion and an open question}

The main question left open by our work is the following:

\begin{question}
Give a condition on the transition matrix which is both necessary and sufficient for a map $f\in\CPM(G)$ to be conjugate to a countably affine graph map of constant slope.
\end{question}

It is easy to see that condition (ii) in Theorem~\ref{t:main} is not necessary. It is enough to construct a countably affine graph $G$ of infinite total length and a constant slope map $f\in\CPM(G)$, whose transition matrix $M$ has no summable eigenvectors. This was done in~\cite{MiRo17} on the extended real line $[-\infty,\infty]$.

Finally, we construct an example to show that condition (i) in Theorem~\ref{t:main} is not sufficient. It is a modification of a piecewise-continuous interval map from~\cite{MiRo17}, made continuous by working on a dendrite.

\begin{example}\label{example1}
Consider $\mathbb{R}^2$ with polar coordinates, writing $(r,\theta)$ in place of $(r\cos\theta,r\sin\theta)$. Fix two decreasing sequences $r_0>r_1>\cdots$ and $\theta_0>\theta_1>\cdots$ with $\theta_0<\pi$ and with limits $\lim_n r_n=0$, $\lim_n\theta_n=0$. Consider the tame graph $G\subset\mathbb{R}^2$ formed as the union of the arcs $A_n=\left\{(r,\theta_n);~0\leq r\leq r_n\right\}$, $n\geq0$; $C_n=\left\{(r,-\theta_n);~0\leq r\leq r_n\right\}$, $n\geq1$; and $B=\left\{(r,-\theta_0);~0\leq r\leq r_0\right\}$.
$G$ is a dendrite with a single branchpoint. Moreover, $G$ is a locally connected fan, and we will refer to the arcs $A_n$, $B$, $C_n$ as the ``blades'' of the fan. We define the map $f\maps G\to G$ so that the branchpoint is fixed, $C_1$ maps affinely onto $B$, $B$ maps affinely onto $A_0$, and on each of the remaining blades $f$ is the piecewise affine ``connect-the-dots'' map with the following dots:
\begin{equation*}
\arraycolsep=1.4pt
\begin{array}{rclcrcl}
\multicolumn{3}{c}{C_{n+1}\xrightarrow{2:1}C_n}&\hspace{3em}&
\multicolumn{3}{c}{C_{2^n}\xleftarrow{2:1} A_n \longrightarrow A_{n+1}} \\ \midrule
f(0,0)&=&(0,0) &&
f(0,0)&=&(0,0) \\
f(r_{n+1}/2,-\theta_{n+1})&=&(r_n,-\theta_n) &&
f(r_n/3,\theta_n)&=&(r_{2^n},-\theta_{2^n}) \\
f(r_{n+1},-\theta_{n+1})&=&(0,0) &&
f(2r_n/3,\theta_n)&=&(0,0) \\
&&&&
f(r_n,\theta_n)&=&(r_{n+1},\theta_{n+1}) \\
\end{array}
\end{equation*}
Figure~\ref{fig:dendritemap} illustrates the map $f$. It is straightforward to verify that if $U$ is an open set, then there is $n_0\geq0$ such that $f^{n_0}(U)$ contains a whole blade, and then $\bigcup_{n=0}^\infty f^n(U)=G$ is the whole dendrite. This shows that $f$ is topologically mixing.

\begin{figure}[htb!!]
\centering
\includegraphics[width=.5\textwidth]{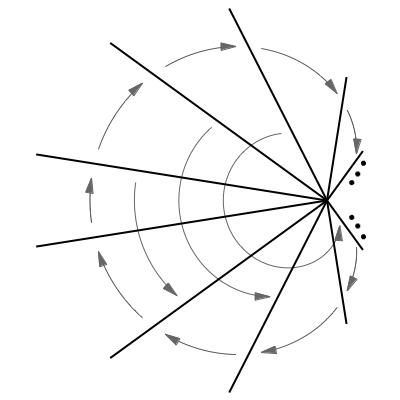}
\begin{picture}(0,0)
\put(-18,65){$C_4$}
\put(-30,27){$C_3$}
\put(-90,-8){$C_2$}
\put(-153,8){$C_1$}
\put(-190,70){$B$}
\put(-195,115){$A_0$}
\put(-157,175){$A_1$}
\put(-91,193){$A_2$}
\put(-30,160){$A_3$}
\put(-18,122){$A_4$}
\put(-24,52){\tiny\color{gray}$2\!:\!1$}
\put(-55,22){\tiny\color{gray}$2\!:\!1$}
\put(-110,18){\tiny\color{gray}$2\!:\!1$}
\put(-122,65){\tiny\color{gray}$2\!:\!1$}
\put(-102,72){\tiny\color{gray}$2\!:\!1$}
\put(-82,80){\tiny\color{gray}$2\!:\!1$}
\end{picture}
\caption{A dendrite map with no conjugate map of constant slope.}\label{fig:dendritemap}
\end{figure}

We will use the partition into blades as a \emph{slack} Markov partition~\cite{BoBru15}. The corresponding transition matrix~\cite{BoBru15} admits (up to scaling) exactly one non-negative eigenvector with eigenvalue $\lambda=2$. Its entries are $v_{A_n}=2^n+1$, $v_B=1$, and $v_{C_n}=\frac12$. The choice to work with the eigenvalue $\lambda=2$ is intentional, as it can be shown (cf.~\cite{BoBru15}) that $h(f)=\log 2$.

Now suppose there is a homeomorphism $\phi\maps G\to G'$ onto a countably affine graph $G'$ which conjugates $f$ with a map $g$ of constant slope $2$. The constant slope condition implies that up to rescaling, the lengths of the blades of $G'$ must be given by the entries of the eigenvector $v$. The construction of this dendrite $G'$ is no problem, but as soon as we put a constant slope map $g$ there with the right Markov dynamics, we find an obstruction to the conjugacy: the set $\bigcap_{n=0}^{\infty} g^{-n}(\phi(A_n))$ is an arc of length
\begin{equation*}
2\times\frac{3}{4}\times\frac{5}{6}\times\cdots\times\frac{2^n+1}{2^n+2}\times\cdots=1,
\end{equation*}
(cf.~\cite{MiRo17}) which means that the map $g$ is not topologically mixing, and therefore cannot be topologically conjugate to $f$.
\end{example}


\section{Entropy and horseshoes for maps from $\CPM(G)$}\label{s:horseshoes}

For a matrix $M=(m_{ij})_{i,j\in\P}$ we can consider the powers $M^n=(m_{ij}(n))_{i,j\in\P}$ of $M$:
\begin{equation}\label{e:11}M^0=I=(\delta_{ij})_{i,j\in \P},~M^n=\left (\sum_{k\in \P}m_{ik}m_{kj}(n-1)\right )_{i,j\in \P}, ~n\in\mathbb N.\end{equation}

\begin{proposition}\label{p:11}
Let $f\in\CPM(G)$ with $M(f) = (m_{ij})_{i,j\in \P}$.
\begin{itemize}
\item[(i)] For each $n\in\mathbb N$ and $i,j\in\P$, the entry $m_{ij}(n)$
of $M^n$ is finite.
\item[(ii)] The entry $m_{ij}(n) = m$ if and
only if there are exactly $m$ arcs $[ii^k_1\cdots i^k_{n-1}j]\in\P_n$, $k\in\{1,\cdots,m\}$, for which $f^n([ii^k_1\cdots i^k_{n-1}j])=j$, $k=1,\dots,m$.
\end{itemize}
\end{proposition}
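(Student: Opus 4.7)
The plan is to handle (ii) directly from the matrix-product expansion combined with Lemma~\ref{lem:refine}, and to establish (i) via a topological accumulation argument inside the free arc $i$.

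For (ii), iterating the recursion~(\ref{e:11}) yields
\[
	m_{ij}(n) = \sum_{i_1, \ldots, i_{n-1} \in \P} m_{i\, i_1} \, m_{i_1 i_2} \cdots m_{i_{n-1}\, j},
\]
which, since each factor is $0$ or $1$, is the cardinality of the set of admissible words $i\, i_1 \cdots i_{n-1}\, j$ (that is, words with $m_{i_\ell i_{\ell+1}} = 1$ for $0 \leq \ell < n$). By Lemma~\ref{lem:refine}~(ii) such a word corresponds to a unique nonempty partition arc $[i\, i_1 \cdots i_{n-1}\, j] \in \P_n$, and by Lemma~\ref{lem:refine}~(iii) this arc is mapped homeomorphically onto $[j] = j$ by $f^n$. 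Conversely, any arc as in the statement of (ii) gives such an admissible word. Hence (ii) reduces to showing that the number of such arcs is finite, which is exactly the content of~(i).

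The crux is therefore~(i). Fix $i, j \in \P$ and $n \geq 1$ and suppose, toward a contradiction, that there is an infinite sequence $A_1, A_2, \ldots$ of distinct arcs in $\P_n$ with $A_k \subset i$ and $f^n(A_k) = j$. These arcs are subarcs of the free arc $i$ with pairwise disjoint interiors (being members of the tame partition $\P_n$), so any parametrizing homeomorphism $\phi\maps [0,1] \to i$ identifies them with closed subintervals $[a_k, b_k] \subset [0,1]$ with disjoint interiors. Disjointness forces $\sum_k (b_k - a_k) \leq 1$, so $b_k - a_k \to 0$; passing to a subsequence we may assume $a_k, b_k \to p$ for some $p \in [0,1]$, and set $x := \phi(p)$. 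Let $u \neq v$ be the two endpoints of the arc $j$. By Lemma~\ref{lem:refine}~(iii), the homeomorphism $f^n|_{A_k}\maps A_k \to j$ carries the endpoints of $A_k$ onto $\{u, v\}$, and passing to a further subsequence we may suppose $f^n(\phi(a_k)) = u$ and $f^n(\phi(b_k)) = v$ for all $k$. Continuity of $f^n$ then forces $u = \lim_k f^n(\phi(a_k)) = f^n(x) = \lim_k f^n(\phi(b_k)) = v$, a contradiction.

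The main obstacle is precisely this accumulation step in~(i): it is not a purely combinatorial fact, but uses both the one-dimensional structure of the free arc $i$ (so that infinitely many subarcs with disjoint interiors must shrink and accumulate somewhere) and the non-degeneracy of the target $j$ (so that its two distinct endpoints produce a genuine contradiction via continuity of $f^n$). Once finiteness is secured, (ii) is immediate from the matrix-product count combined with Lemma~\ref{lem:refine}.
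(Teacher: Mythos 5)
Your proof is correct, but your route to part (i) is different from the paper's. For (ii) the two arguments are essentially the same: the paper proceeds by induction on $n$ from the definition (\ref{e:3}) and Lemma~\ref{lem:refine}(iii), while you unfold the recursion (\ref{e:11}) once and for all and invoke the word--arc correspondence from Lemma~\ref{lem:refine}(ii),(iii); this is only a difference of bookkeeping (like the paper, you implicitly use that distinct admissible words give distinct arcs of $\P_n$, which holds because an arc determines its word: $f^\ell$ of a nondegenerate arc of $\P_n$ is a nondegenerate arc and the arcs of $\P$ have pairwise disjoint interiors). For (i), the paper argues at level one: by continuity of $f$ only finitely many $i\in\P$ can satisfy $f(i)\supset j$, i.e. each column sum $\sum_{i\in\P}m_{ij}$ is finite, and finiteness of every entry of $M^n$ then follows from the recursion by induction. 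You instead argue directly at level $n$: infinitely many arcs of $\P_n$ inside the free arc $i$ mapping onto $j$ would have pairwise disjoint interiors in a parametrized arc, hence would shrink and accumulate at a point, and continuity of $f^n$ would force the two distinct endpoints of $j$ to coincide. Both are valid; your version is more self-contained and makes explicit the compactness-and-continuity mechanism that the paper hides behind the phrase ``from the continuity of $f$,'' whereas the paper's version is shorter, isolates a level-one fact (finite column sums) that does not require the covering arcs to lie in a single partition arc, and lets the matrix recursion do the rest.
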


\begin{proof}(i) From the continuity of $f$ and the definition of $M(f)$ follows that the sum
$\sum_{i\in\P}m_{ij}$ is finite for each $j\in\P$, which directly
implies (i). ~(ii) For $n=1$ this is given by the relation (\ref{e:3}) defining the matrix $M=M(f)$. The induction step follows immediately from the definition (\ref{e:11}) of the product of the nonnegative matrices $M$ and $M^{n-1}$ and Lemma \ref{lem:refine}(iii).
\end{proof}

\begin{corollary}\label{c:3} With the help of Proposition \ref{p:11} one can show that for each $f\in\CPM(G)$, the set $\Per(f)$ of periodic points is dense in $G$ and $\htop(f)>0$.
\end{corollary}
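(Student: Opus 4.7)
The strategy is to apply Proposition~\ref{p:11}(ii) twice: once to produce a periodic point inside every prescribed open set, and once to exhibit a two-branched self-covering that forces positive entropy. The key preliminary observation is that topological mixing of $f$ transfers to the transition matrix: applied to $U=i^\circ$ and $V=j^\circ$, mixing together with the Markov property (intersection with $j^\circ$ forces containment of $j$) shows that $m_{ij}(n)\geq 1$ for all sufficiently large $n$, for any fixed $i,j\in\P$.

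For density of $\Per(f)$, let $U\subseteq G$ be nonempty and open. By Lemma~\ref{lem:refine}(iv), $Q$ is dense in $G$, so some refined partition arc $J=[i_0 i_1\cdots i_n]\in\P_n$ lies in $U$. Using the matrix-mixing translation above, choose $N$ large enough that there is an admissible word $i_n=j_0,j_1,\ldots,j_N=i_0$. Concatenating yields the admissible word $i_0 i_1\cdots i_n j_1\cdots j_{N-1} i_0$, and the corresponding arc $J'=[i_0\cdots i_n j_1\cdots j_{N-1} i_0]\in\P_{n+N}$ satisfies $J'\subseteq J\subseteq i_0$. By Lemma~\ref{lem:refine}(iii), $f^{n+N}$ is a homeomorphism from $J'$ onto $[i_0]=i_0$. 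Since $J'\subseteq i_0=f^{n+N}(J')$, the inverse homeomorphism $(f^{n+N}|_{J'})^{-1}\colon i_0\to J'$ restricts to a continuous self-map of the arc $J'$, and the Brouwer fixed point theorem on $J'\cong[0,1]$ yields $p\in J'\subseteq U$ with $f^{n+N}(p)=p$.

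For $\htop(f)>0$, fix $i_0\in\P$ and pick any $j\in\P\setminus\{i_0\}$, which is possible since the Markov partition in the class $\CPM(G)$ is infinite. Choose $N$ large enough that $m_{i_0 i_0}(N)$, $m_{i_0 j}(N)$, and $m_{j i_0}(N)$ are all positive. Counting admissible paths of length $2N$ from $i_0$ to $i_0$, one such path has $i_0$ in position $N$ (from $m_{i_0 i_0}(N)\cdot m_{i_0 i_0}(N)\geq 1$) and a disjoint one has $j\neq i_0$ in position $N$ (from $m_{i_0 j}(N)\cdot m_{j i_0}(N)\geq 1$), so $m_{i_0 i_0}(2N)\geq 2$. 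Proposition~\ref{p:11}(ii) then gives two distinct arcs $J_1,J_2\in\P_{2N}$, both contained in $i_0$ with pairwise disjoint interiors, each mapped homeomorphically onto $i_0$ by $f^{2N}$. Such a two-branched self-covering of the arc $i_0$ is a classical horseshoe and standardly forces $\htop(f^{2N})\geq\log 2$, whence $\htop(f)\geq(\log 2)/(2N)>0$.

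The only subtle step is the fixed point extraction: one has to observe that $(f^{n+N}|_{J'})^{-1}$ sends $J'$ into itself precisely because $J'\subseteq i_0$, so that Brouwer applies on the arc $J'$ rather than on its homeomorphic image $i_0$. Everything else is a direct translation of the path-counting content of Proposition~\ref{p:11} into the dynamics on $G$ via the Markov structure.
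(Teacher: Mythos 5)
Your proof is correct and takes exactly the route the paper intends (the paper states Corollary~\ref{c:3} without proof, pointing to Proposition~\ref{p:11}): mixing gives irreducibility-type estimates on $M$, Proposition~\ref{p:11}(ii) together with Lemma~\ref{lem:refine} pins a fixed point of $f^{n+N}$ inside an arbitrary open set, and $m_{i_0i_0}(2N)\ge 2$ yields a $2$-horseshoe exactly as in the proof of Theorem~\ref{t:6}, forcing $\htop(f)>0$. The only step you pass over quickly --- that density of $Q$ gives a partition arc of some $\P_n$ inside a given open set --- is the same deduction the paper itself makes in the proof of Proposition~\ref{p:1}(ii), so nothing is missing.
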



Let us recall that a matrix $M=(m_{ij})_{i,j\in\A}$, where the index set $\A$ is finite or countably infinite, is
\begin{itemize}\item \emph{irreducible},
if for each pair of indices $i,j$ there exists a positive integer $n$ such
that $m_{ij}(n)>0$,
\item \emph{aperiodic}, if for each index $i\in\A$ the value $p(i)=\gcd\{\ell\colon~m_{ii}(\ell)>0\}$ equals to \emph{one}.
\end{itemize}

\begin{remark}For $f\in\CPM(G)$ with Markov partition $\P$ its transition matrix $M=M(f)=(m_{ij})_{i,j\in\P}$ is irreducible and aperiodic.
\end{remark}

In the sequel we follow the approach suggested by Vere-Jones \cite{Ver-Jo62}.

\begin{proposition}\label{p:2}Let $M=(m_{ij})_{i,j\in\A}$ be
a nonnegative irreducible aperiodic matrix indexed by a countable index set $\A$. There exists a
common value $\lambda_M$ such that for each $i,j$
\begin{equation*}\label{e:13}\lim_{n\to\infty} [m_{ij}(n)]^{\frac{1}{n}}=\lambda_M.\end{equation*}
\end{proposition}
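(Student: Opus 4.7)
The plan is to follow the classical Vere-Jones argument in three stages: establish existence of the limit along the diagonal at a single index, show the diagonal limits all agree, and then extend to off-diagonal entries. Throughout I will use freely the Chapman--Kolmogorov-type inequality
$$m_{ij}(m+n) \;=\; \sum_{k \in \A} m_{ik}(m)\, m_{kj}(n) \;\geq\; m_{ik}(m)\, m_{kj}(n) \qquad \text{for any fixed } k,$$
which follows from (\ref{e:11}) by induction and nonnegativity.

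First I would fix an index $i \in \A$ and prove that $\lim_n m_{ii}(n)^{1/n}$ exists. Taking $k = i$ in the displayed inequality gives $m_{ii}(m+n) \geq m_{ii}(m)\, m_{ii}(n)$, so that $(\log m_{ii}(n))$ is superadditive on its support. The set $S_i = \{\ell \geq 1 : m_{ii}(\ell) > 0\}$ is thus closed under addition and, by aperiodicity, has greatest common divisor $1$; a standard number-theoretic lemma then yields an $n_0 = n_0(i)$ such that $m_{ii}(n) > 0$ for every $n \geq n_0$. Fekete's lemma now applies to the genuinely superadditive sequence $(\log m_{ii}(n))_{n \geq n_0}$ and delivers
$$\lambda_i \;:=\; \lim_{n\to\infty} m_{ii}(n)^{1/n} \;\in\; [0, \infty].$$

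Next I would show that $\lambda_i$ does not depend on $i$. Given two indices $i, j$, irreducibility produces integers $p, q \geq 1$ with $m_{ij}(p),\, m_{ji}(q) > 0$, and the basic inequality applied twice gives
$$m_{jj}(n + p + q) \;\geq\; m_{ji}(q)\, m_{ii}(n)\, m_{ij}(p).$$
Taking $(n+p+q)$-th roots and sending $n \to \infty$ makes the constant factors contribute $1$, leaving $\lambda_j \geq \lambda_i$. Swapping the roles of $i$ and $j$ gives the reverse inequality, so $\lambda_i = \lambda_j$, and I denote this common value by $\lambda_M$.

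Finally, for distinct $i, j$ I would sandwich $m_{ij}(n)^{1/n}$ between two expressions that both converge to $\lambda_M$. The bound $m_{ji}(q)\, m_{ij}(n) \leq m_{ii}(n+q)$ (a single term in the sum defining $m_{ii}(n+q)$) yields $\limsup_n m_{ij}(n)^{1/n} \leq \lambda_M$, while the bound $m_{ij}(n+p) \geq m_{ii}(n)\, m_{ij}(p)$ yields $\liminf_n m_{ij}(n)^{1/n} \geq \lambda_M$. Together these inequalities complete the proof.

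The delicate point is the first stage. Ordinary Fekete's lemma requires the sequence to be defined and finite on all of $\mathbb{N}$, but a priori nothing prevents the diagonal sequence $(m_{ii}(n))$ from having many zeros. Aperiodicity is precisely the hypothesis that allows me to discard the anomalous initial segment and apply Fekete from some $n_0$ onward; once this is settled, everything else reduces to elementary bookkeeping with concatenations of paths.
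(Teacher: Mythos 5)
Your proof is correct, and it is essentially the argument the paper is implicitly relying on: the paper does not prove this proposition at all, but simply quotes it from Vere-Jones \cite{Ver-Jo62}, whose classical proof is exactly your three-stage scheme (superadditivity of $\log m_{ii}(n)$ plus aperiodicity and Fekete's lemma for the diagonal limit, then irreducibility to identify the diagonal limits, then the path-concatenation sandwich for the off-diagonal entries). One small point worth making explicit: taking logarithms requires $m_{ii}(n)<\infty$, which for a general nonnegative matrix over a countable index set is not automatic (entries of powers can be infinite); in the setting where the paper applies the proposition this finiteness is supplied by Proposition~\ref{p:11}(i), and your Fekete step should be stated in the ``from $n_0$ on'' form you indicate, with the remainder of the division $n=qm+r$ taken in a window of length $m$ above $n_0$ so that $a_r$ is finite. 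With that understood, your handling of the delicate first stage and the elementary bookkeeping in the last two stages are exactly right.
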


The \emph{Gurevich entropy} of a nonnegative irreducible aperiodic matrix $M$ is defined as
\begin{equation*}\label{e:12}
h(M) = \sup\{\log r(M') : M' \text{ is a finite transition submatrix of } M \},
\end{equation*}
where $r(M')$ is the largest eigenvalue of the finite transition matrix $M'$.

We can also ask about the entropy of the corresponding Markov shift $(\Sigma_M,\sigma)$. When $\P$ is infinite, this system is noncompact, so there are many possible notions of entropy. We will write $h(\Sigma_M)$ to denote the supremum of entropies of ergodic shift-invariant Borel probability measures.

In \cite{Gur69} Gurevich proved the following proposition.
An accessible proof in English can be found in \cite{Ki98}.

\begin{proposition}\label{p:4}
$h(\Sigma_M) = h(M) = \log \lambda_M$.
\end{proposition}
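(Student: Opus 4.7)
The plan is to establish the chain of inequalities
$$h(M) \le h(\Sigma_M) \le \log \lambda_M \le h(M),$$
so that all three quantities coincide. The outer inequalities rely only on the combinatorics of the nonnegative matrix $M$, while the middle inequality uses countable-state entropy theory.

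For the matrix-theoretic side, I would first dispatch $h(M) \le \log \lambda_M$: for any finite sub-matrix $M'=(m'_{ij})_{i,j \in \A'}$ one has $m'_{ij}(n) \le m_{ij}(n)$ entry-wise, so Gelfand's formula $r(M') = \lim_n \|M'^n\|^{1/n}$ together with Proposition~\ref{p:2} forces $r(M') \le \lambda_M$, and taking the supremum over finite sub-matrices yields the claim. For the reverse $\log \lambda_M \le h(M)$, fix $i \in \A$ and, given $\eps > 0$, use Proposition~\ref{p:2} to choose $n$ with $m_{ii}(n) \ge (\lambda_M - \eps)^n$. The $m_{ii}(n)$ admissible loops of length $n$ through $i$ visit only finitely many intermediate indices; let $\A'$ be their union and $M'$ the restriction of $M$ to $\A'$. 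Every length-$n$ loop at $i$ stays inside $\A'$, so $m'_{ii}(n) = m_{ii}(n)$, hence $m'_{ii}(kn) \ge m'_{ii}(n)^k \ge (\lambda_M-\eps)^{kn}$ for all $k$ and therefore $r(M') \ge \lambda_M - \eps$. Letting $\eps \to 0$ finishes this step.

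Next, the inequality $h(M) \le h(\Sigma_M)$: each finite sub-matrix $M'$ defines a compact sub-shift of finite type $\Sigma_{M'} \subset \Sigma_M$ of topological entropy $\log r(M')$. By the classical variational principle on compact systems, $\Sigma_{M'}$ carries a shift-invariant Borel probability measure of entropy $\log r(M')$; ergodic decomposition produces an ergodic component of the same entropy, and this measure extends to $\Sigma_M$ by assigning zero mass off $\Sigma_{M'}$. Taking the supremum over $M'$ gives $h(\Sigma_M) \ge h(M)$.

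The main obstacle is the remaining inequality $h(\Sigma_M) \le \log \lambda_M$, which requires bounding $h_\mu(\sigma) \le \log \lambda_M$ for every ergodic shift-invariant Borel probability measure $\mu$ without compactness of $\Sigma_M$. Following Gurevich, I would select $i_0 \in \A$ with $\mu([i_0]) > 0$ and pass to the first-return map $\sigma_{[i_0]}$ on the cylinder $[i_0]$. Abramov's formula gives $h_\mu(\sigma) = \mu([i_0]) \cdot h_{\mu_{[i_0]}}(\sigma_{[i_0]})$, and the induced system is generated by the countable alphabet of first-return words $i_0 i_1 \cdots i_{k-1} i_0$, whose count of length $k$ is bounded by $m_{i_0 i_0}(k)$. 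The generating-function machinery of Vere-Jones~\cite{Ver-Jo62}, combined with $\limsup_k m_{i_0 i_0}(k)^{1/k} = \lambda_M$ from Proposition~\ref{p:2}, controls the entropy of the first-return system and yields $h_\mu(\sigma) \le \log \lambda_M$. This combinatorial-ergodic analysis, avoiding any use of compactness, is exactly the technical heart of the proofs in \cite{Gur69,Ki98}.
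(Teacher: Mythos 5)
The paper offers no proof of this proposition at all: it is quoted from Gurevich \cite{Gur69}, with \cite{Ki98} given as an accessible reference. So there is no internal argument to compare yours with; what you have written is a reconstruction of the standard proof from the cited literature, and in outline it is correct. Your two matrix-theoretic inequalities are complete (in the second one you implicitly use that $m_{ii}(n)<\infty$, so that the finitely many length-$n$ loops at $i$ involve only finitely many symbols; in the paper's setting this is Proposition~\ref{p:11}(i), and if $\lambda_M=\infty$ both outer inequalities are trivial anyway). The passage $h(M)\le h(\Sigma_M)$ via finite subshifts of finite type inside $\Sigma_M$ is also fine; note that attaining the value $\log r(M')$ by a measure uses expansiveness of the compact subshift, though measures of entropy within $\eps$ of it would serve just as well for the supremum.

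The one place where you have a sketch rather than a proof is exactly the inequality you single out as the main obstacle, $h(\Sigma_M)\le\log\lambda_M$: invoking Abramov's formula together with ``the generating-function machinery of Vere-Jones'' does not by itself produce the estimate. To finish along the route you indicate one needs three concrete ingredients: (a) distinct concatenations of $n$ first-return words give distinct loops at $i_0$ (the decomposition is recovered by cutting at each visit to $i_0$), so the atoms of the $n$-fold refinement of the return-word partition having total length $L$ number at most $m_{i_0i_0}(L)\le C(\lambda_M+\eps)^L$ by Proposition~\ref{p:2}; (b) Kac's lemma, which gives expected return time $1/\mu([i_0])$ and hence expected total length $n/\mu([i_0])$; and (c) the bound $\log(\text{mean})+O(1)$ for the entropy of the length distribution, so that after dividing by $n$ the auxiliary terms vanish and one gets $h_{\mu_{[i_0]}}(\sigma_{[i_0]})\le\log(\lambda_M+\eps)/\mu([i_0])$, whence $h_\mu(\sigma)\le\log(\lambda_M+\eps)$ by Abramov; this computation also shows the return-word partition has finite static entropy, so Kolmogorov--Sinai applies to it. Since the paper itself delegates the entire proposition to \cite{Gur69,Ki98}, deferring this step is defensible, but be aware that it is precisely this estimate -- not the three inequalities you carried out -- that constitutes the content of the proposition.
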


We wish to interpret Gurevich entropy in the context of a graph map $f\in\CPM(G)$. To do it, we need to understand more closely the connection between the dynamical system $(G,f)$ and its symbolic dynamics $(\Sigma_M,\sigma)$. From the point of view of topological dynamics, the relationship is given by the map
\begin{equation}\label{e:psi}
\psi\maps\Sigma_M\to G, \quad \psi(i_0i_1\cdots) = x \text{ where } \left\{x\right\} = \bigcap_{n=0}^\infty [i_0i_1\cdots i_n].
\end{equation}
Using Lemma~\ref{lem:refine} it is easy to verify that $\psi$ is well-defined, continuous, and that $\psi\circ\sigma=f\circ\psi$. Unfortunately, $\psi$ in general is neither injective nor surjective, so we cannot speak of a conjugacy or semiconjugacy.

Nevertheless, the relationship between our graph map and its symbolic dynamics is useful from the point of view of ergodic theory. The following definition is due to Hofbauer \cite{Hof79}.

\begin{definition}
In a measurable dynamical system $(X,T)$ a measurable set $N\subset X$ is called \emph{small} if it is backward invariant
 ($T^{-1}(N)\subset N$) and if every invariant probability measure $\mu$ concentrated on $N$ has entropy $h_\mu=0$. Two measurable dynamical systems $(X,T)$, $(X', T')$ are \emph{isomorphic modulo small sets} if there exist small sets $N\subset X$, $N'\subset X'$ and a bimeasurable bijection $\psi\maps X\setminus N \to X'\setminus N'$ such that $T'\circ\psi=\psi\circ T$.
\end{definition}

\begin{lemma}\label{l:null}
A small set is a measure zero set with respect to every ergodic invariant probability measure of positive entropy.
\end{lemma}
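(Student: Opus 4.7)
The plan is to show directly that if $\mu$ is an ergodic $T$-invariant probability measure with $h_\mu > 0$, then $\mu(N) = 0$. The argument combines an ergodicity dichotomy with the defining property of smallness.

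First, I would upgrade the backward invariance $T^{-1}(N) \subset N$ to a genuinely $T$-invariant set. Iterating backward invariance gives a decreasing chain $N \supset T^{-1}(N) \supset T^{-2}(N) \supset \cdots$, and I would set
\[
M := \bigcap_{n=0}^{\infty} T^{-n}(N).
\]
A direct computation shows $T^{-1}(M) = \bigcap_{n=1}^{\infty} T^{-n}(N) = M$ (the last equality uses the decreasing property), so $M$ is $T$-invariant in the strict sense.

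Next, using $T$-invariance of $\mu$ I have $\mu(T^{-n}(N)) = \mu(N)$ for every $n$, and continuity of measure along the decreasing sequence yields $\mu(M) = \mu(N)$. Ergodicity of $\mu$ applied to the $T$-invariant set $M$ forces $\mu(M) \in \{0, 1\}$, and therefore $\mu(N) \in \{0, 1\}$ as well.

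Finally, the case $\mu(N) = 1$ is incompatible with $h_\mu > 0$: it would mean $\mu$ is concentrated on $N$, so by the hypothesis that $N$ is small we would have $h_\mu = 0$, a contradiction. Hence $\mu(N) = 0$, as required. There is no real obstacle here; the only point worth checking carefully is the passage from backward invariance to strict $T$-invariance modulo a null set, which the intersection construction above handles cleanly.
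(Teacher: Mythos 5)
Your proof is correct and follows essentially the same route as the paper's: the ergodic dichotomy for the (backward) invariant set $N$ combined with the defining property of smallness when $\mu(N)=1$. Your explicit passage from backward invariance to the strictly invariant set $M=\bigcap_{n\ge 0}T^{-n}(N)$ with $\mu(M)=\mu(N)$ merely spells out carefully what the paper's two-sentence argument leaves implicit.
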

\begin{proof}
Because a small set is invariant, each ergodic measure assigns to it either full or zero measure. If it is full, then the entropy must be zero.
\end{proof}

Our dynamical systems $(G,f)$, $(\Sigma_M, \sigma)$ become measurable dynamical systems as soon as we equip them with their Borel $\sigma$-algebras. Then we have:

\begin{theorem}
\label{t:iso} Let $f\in\CPM(G)$ with a partition set $\P$. Then $(G,f)$ and $(\Sigma_M,\sigma)$ are isomorphic modulo small sets.
\end{theorem}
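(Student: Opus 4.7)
\medskip
\noindent\emph{Proof plan.} The plan is to take the continuous semiconjugacy $\psi\maps\Sigma_M\to G$ from~\eqref{e:psi} as the bijection itself, with small sets $N := Q = \bigcup_{n\ge 0} f^{-n}(P) \subset G$ and $N' := \psi^{-1}(Q) \subset \Sigma_M$. Both are manifestly backward invariant, since $f^{-1}(Q)\subset Q$ by construction and $\sigma^{-1}(N') = \psi^{-1}(f^{-1}(Q))\subset N'$. For the bijective restriction, I would argue that every $x\in G\setminus Q$ has iterates $f^n(x)$ each lying in the interior of a unique partition arc, so that $x$ has a unique itinerary $(i_0,i_1,\ldots)$ with $\psi((i_0,i_1,\ldots))=x$; hence $\psi$ restricts to a continuous bijection $\Sigma_M\setminus N'\to G\setminus N$. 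Bimeasurability is then the Lusin--Souslin theorem for continuous injections between Polish spaces, and the intertwining with $\sigma$ is inherited from $\psi\circ\sigma=f\circ\psi$ via backward invariance of $N$ and $N'$.

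Smallness of $N$ is straightforward: $Q$ is countable (a countable union of preimages of the countable set $P$), so any $f$-invariant probability measure on $Q$ must be concentrated on $f$-periodic orbits, because a non-periodic forward orbit would carry infinitely many points of equal positive measure. Ergodic decomposition then writes such a measure as a convex combination of uniform measures on finite orbits, each of entropy zero.

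The hard part will be smallness of $N'$, because \emph{a priori} over a periodic branchpoint the symbolic coding could admit many competing itineraries and thereby support measures of positive entropy. The decisive technical lemma I would establish is that for every $f$-periodic point $p\in G$, the fiber $\psi^{-1}(\{p\})$ is \emph{finite}. The idea is: $p$ is an endpoint of every partition arc $i\in\P$ containing it, and the image $f(i)$ is an arc with $p$ as one of its endpoints (when $p$ is fixed; the general periodic case reduces to this by iterating under $f^k$). An arc emanating from $p$ in a tame graph enters only one of the finitely many branches at $p$, so it contains at most one partition arc $i'$ incident to $p$. Thus the transition $i\mapsto i'$ inside the fiber is a partial function on the finite set $\{j\in\P:p\in j\}$, and $|\psi^{-1}(\{p\})|\le\ord(p,G)<\infty$.

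Given this fiber lemma, smallness of $N'$ will follow from a two-step reduction: any $\sigma$-invariant probability $\mu$ on $N'$ projects via $\psi_*$ to an $f$-invariant probability measure on $Q$, which by the argument used for $N$ is supported on countably many periodic orbits of $f$; the fiber lemma then forces $\mu$ to be supported on a countable union of \emph{finite} $\sigma$-orbits lying over these periodic orbits, i.e., on $\sigma$-periodic sequences. The same ergodic-decomposition argument yields $h_\mu(\sigma)=0$, so $N'$ is small and $\psi$ implements the desired isomorphism modulo small sets.
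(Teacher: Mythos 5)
Your overall architecture coincides with the paper's: the same map $\psi$ from \eqref{e:psi}, the same candidate small sets $Q$ and $\psi^{-1}(Q)$, the same itinerary-map inverse on the complement, and an acceptable measurability step (the paper checks continuity of the itinerary map directly instead of invoking Lusin--Souslin, but either works). The genuine gap is in your ``decisive technical lemma''. You claim that for a periodic point $p$ the fiber satisfies $|\psi^{-1}(\{p\})|\le\ord(p,G)<\infty$, arguing with ``the finitely many branches at $p$''. In a tame graph the order of a point need not be finite: branchpoints of infinite order are exactly what distinguishes tame graphs from finite graphs, and the paper's own Example~\ref{example1} has a \emph{fixed} branchpoint of infinite order, with infinitely many partition arcs incident to it. So the bound by $\ord(p,G)$ yields nothing there, and finiteness of the fiber is not what your argument establishes. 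A second inaccuracy: the reduction through $\psi_*\mu$ produces periodic points lying in $Q=\bigcup_{n\ge 0}f^{-n}(P)$, not necessarily in $P_0$; for a periodic $p\in Q\setminus P_0$ your premise that ``$p$ is an endpoint of every partition arc containing it'' is false ($p$ lies in the interior of its unique partition arc, and only some iterate $f^m(p)$ lies in $P_0$), so the unique-successor argument does not even start as stated.

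Both defects are repairable, because your strategy only needs the fibers over the relevant points to be \emph{countable}: then $\psi^{-1}(\Per(f)\cap Q)$ is a countable invariant set and the same atom/ergodic-decomposition argument gives $h_\mu(\sigma)=0$. Countability does follow from a corrected form of your unique-successor observation, and this is exactly how the paper argues, in a way that makes the detour through periodic points unnecessary: for any $x\in P_n$ (periodic or not), an element of $\psi^{-1}(x)$ is determined by its first $n+1$ symbols --- if two itineraries of $x$ agree up to time $t\ge n$, then $x$ is an endpoint of the $\P_t$-arc $[i_0\cdots i_t]$, and the $\P_{t+1}$-subarcs $[i_0\cdots i_{t+1}]$ and $[i'_0\cdots i'_{t+1}]$ both contain $x$, hence have non-disjoint interiors and coincide. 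Consequently every fiber over a point of $Q$ is countable, so $N=\psi^{-1}(Q)$ is itself countable and its smallness follows at once, with no need to restrict to periodic points or to bound fibers by the order of the point.
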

\begin{proof}
Recall the definition of $\psi$ in~(\ref{e:psi}) and $Q$ in Lemma~\ref{lem:refine}. We will show that
\begin{enumerate}[label=(\roman*)]
\item\label{it:small} $Q\subset G$ and $N=\psi^{-1}(Q)\subset\Sigma_M$ are small sets,
\item\label{it:bij} The restricted map $\psi\maps\Sigma_M\setminus N \to G\setminus Q$ is a bijection, and
\item\label{it:meas} With respect to the Borel $\sigma$-algebras, both $\psi\maps\Sigma_M\setminus N \to G\setminus Q$ and its inverse are measurable.
\end{enumerate}
We start with~\ref{it:small}. Backward invariance of $Q$ is clear from the formula $Q=\bigcup_{n=0}^\infty f^{-n}(P)$. $N$ inherits backward invariance from $Q$ because of the relation $f\circ\psi=\psi\circ\sigma$. By Lemma~\ref{lem:refine}~\ref{it:mar}, $Q$ is countable, and therefore every invariant measure concentrated on $Q$ has entropy zero. Finally, we argue that $N$ is also countable. Let $x\in P_n$. It is enough to show that any two points $i=i_0i_1\cdots$, $i'=i'_0i'_1\cdots$ in $\psi^{-1}(x)$ with $i_0\cdots i_n=i'_0\cdots i'_n$ must be equal. We do it by induction. Suppose $i_0\cdots i_t=i'_0\cdots i'_t$ with $t\geq n$. Since $x\in P_n\subset P_t$, we see that $x$ is an endpoint of the $\P_t$ partition arc $[i_0\cdots i_t]$. Both of the subarcs $[i_0\cdots i_{t+1}]$, $[i'_0\cdots i'_{t+1}]$ also contain $x$, and therefore their interiors are not disjoint. Since these are $\P_{t+1}$ partition arcs, we get $i_{t+1}=i'_{t+1}$. This concludes the induction step and the proof that $N$ is countable.

We prove~\ref{it:bij} by giving the formula for the inverse map. It is the so-called itinerary map $\phi\maps G\setminus Q \to \Sigma_M\setminus N$ given by setting $\phi(x)=i_0i_1\cdots$ if $f^n(x)\in i_n\in\P$ for all $n\geq0$. It is just a matter of checking the definitions to see that $\phi(x)=i_0i_1\cdots$ if and only if $x\in[i_0\cdots i_n]$ for all $n$, which happens if and only if $\psi(i_0i_1\cdots)=x$. Thus $\phi,\psi$ are inverses to each other.

Next we prove~\ref{it:meas}. We already know that $\psi\maps \Sigma_M\to G$ is continuous. Therefore the preimage of a relatively open subset $U\subset G\setminus Q$ is relatively open in $\Sigma_M\setminus N$. This shows that the restricted map $\psi\maps \Sigma_M\setminus N \to G\setminus Q$ is also continuous and therefore measurable. To prove measurability of the inverse, we will show that $\phi$ is continuous at each point where it is defined. Fix a point $x\in G\setminus Q$ and write $\phi(x)=i_0i_1\cdots$. Fix $n\geq0$. We will show that $x$ has a neighborhood $U$ in the graph $G$ such that $U\subset[i_0\cdots i_n]$. Since $x\notin P_n$ and $P_n$ is closed, we know that $x$ has a neighborhood $V$ in $G$ with $V\cap P_n=\emptyset$. By local connectedness of $G$ there is a connected neighborhood $U$ of $x$ contained in $V$. Since $U$ is connected and does not intersect $P_n$, it must be contained in a single arc of $\P_n$, and therefore $U\subset [i_0\cdots i_n]$, as desired.
\end{proof}

\begin{corollary}\label{c:entropy}
If $f\in\CPM(G)$ has transition matrix $M$, then $\htop(f)=\log\lambda_M$.
\end{corollary}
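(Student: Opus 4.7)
The plan is to combine the variational principle with the isomorphism modulo small sets of Theorem~\ref{t:iso} and with Proposition~\ref{p:4}. Since $G$ is a compact metric space and $f$ is continuous, the variational principle applies and gives
\[
\htop(f) = \sup_\mu h_\mu(f),
\]
where $\mu$ ranges over the ergodic shift-invariant Borel probability measures on $G$. By definition, $h(\Sigma_M)$ is the analogous supremum on the symbolic side, so it suffices to produce an entropy-preserving bijection between the ergodic invariant measures on the two sides that carry positive metric entropy, and then to note that by Corollary~\ref{c:3} the relevant suprema are positive (so ergodic measures of zero entropy are irrelevant).

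For this, I would proceed as follows. Let $N\subset\Sigma_M$ and $Q\subset G$ be the small sets produced in the proof of Theorem~\ref{t:iso}, and let $\psi\maps \Sigma_M\setminus N\to G\setminus Q$ be the bimeasurable bijection with inverse $\phi$ such that $\psi\circ\sigma=f\circ\psi$. Given an ergodic invariant Borel probability measure $\mu$ on $G$ with $h_\mu(f)>0$, Lemma~\ref{l:null} yields $\mu(Q)=0$, so $\phi_*\mu$ defines an ergodic invariant Borel probability measure on $\Sigma_M\setminus N$; extending by zero across the (countable, hence Borel) small set $N$ produces an ergodic invariant Borel probability measure $\mu'$ on $\Sigma_M$. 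Since metric entropy is an invariant of measurable isomorphism, $h_{\mu'}(\sigma)=h_\mu(f)>0$. The same construction in the reverse direction assigns to each ergodic $\mu'$ on $\Sigma_M$ with $h_{\mu'}(\sigma)>0$ an ergodic $\mu=\psi_*\mu'$ on $G$ with $h_\mu(f)=h_{\mu'}(\sigma)$, again by Lemma~\ref{l:null} applied to $N$.

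Consequently,
\[
\sup\{h_\mu(f):\mu\text{ ergodic, }h_\mu(f)>0\} = \sup\{h_{\mu'}(\sigma):\mu'\text{ ergodic, }h_{\mu'}(\sigma)>0\}.
\]
By Corollary~\ref{c:3} the left-hand supremum coincides with $\htop(f)$, and the right-hand supremum coincides with $h(\Sigma_M)$. Applying Proposition~\ref{p:4} we conclude $\htop(f)=h(\Sigma_M)=\log\lambda_M$.

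The only delicate point is the bookkeeping around the small sets: one must verify that $\phi_*\mu$ extends to a genuine Borel probability measure on all of $\Sigma_M$ (so that it qualifies as a candidate in the definition of $h(\Sigma_M)$) and that the metric entropy is truly preserved under the isomorphism modulo small sets. Both facts are standard once Lemma~\ref{l:null} is invoked, because the small sets carry no mass for the measures in question and entropy is a measure-theoretic invariant.
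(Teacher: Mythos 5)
Your proposal is correct and follows essentially the same route as the paper's own proof: the variational principle on the compact space $G$, the isomorphism modulo small sets from Theorem~\ref{t:iso} together with Lemma~\ref{l:null} to get an entropy-preserving bijection between positive-entropy ergodic measures, positivity of entropy from Corollary~\ref{c:3}, and Proposition~\ref{p:4} to identify $h(\Sigma_M)$ with $\log\lambda_M$. Your extra bookkeeping about extending $\phi_*\mu$ by zero across the countable set $N$ is a fine (and correct) elaboration of what the paper leaves implicit.
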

\begin{proof}
Consider the sets $\mathcal{E}_+(\Sigma_M,\sigma)$, $\mathcal{E}_+(G,f)$ of positive-entropy ergodic invariant Borel probability measures on our two systems. If we take the supremum of entropies of measures over these two sets we get $\log\lambda_M$ and $\htop(f)$, respectively; this uses Proposition~\ref{p:4}, the variational principle for continuous maps on the compact space $G$, and the fact that both $\Sigma_M$ and $f$ have positive entropy. By Lemma~\ref{l:null}, our isomorphism modulo small sets $\psi$ induces a bijection $\psi_*\maps \mathcal{E}_+(\Sigma_M,\sigma)\to\mathcal{E}_+(G,f)$, $\psi_*(\mu)=\mu\circ\psi^{-1}$ which preserves entropy: $h_{\psi_*\mu}(f)=h_\mu(\sigma)$.
\end{proof}


\begin{theorem}\label{t:6}Let $f\in\CPM(G)$.
Then there is a sequence $(s_n)_n$ of positive integers such that $f^n$ has an $s_n$-horseshoe and $$\lim_{n}\frac{1}{n}\log s_n=\htop(f).$$
\end{theorem}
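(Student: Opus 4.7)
The plan is to locate the horseshoes directly inside a single partition arc, using the combinatorics of the transition matrix $M=M(f)$. Fix an arbitrary $i \in \P$ and look at the diagonal entries $m_{ii}(n)$ of the power matrix $M^n$. By Proposition~\ref{p:11}~(ii), $m_{ii}(n)$ is precisely the number of partition arcs $[i\,i_1\cdots i_{n-1}\,i] \in \P_n$ contained in $i$ that are mapped by $f^n$ homeomorphically onto $i$ itself. Since these arcs belong to the partition $\P_n$, they have pairwise disjoint interiors (Lemma~\ref{lem:refine}~\ref{it:int}), and each of them is mapped by $f^n$ onto $i$, which in particular contains all of them. Thus they constitute an $s_n$-horseshoe for $f^n$ with $s_n := m_{ii}(n)$, provided $m_{ii}(n) \geq 1$.

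To control the growth of $s_n$, I would invoke two facts proved earlier in the paper. First, by Proposition~\ref{p:2} applied to the irreducible aperiodic transition matrix $M$, we have
\[
\lim_{n\to\infty} [m_{ii}(n)]^{1/n} = \lambda_M.
\]
Second, by Corollary~\ref{c:entropy}, $\log \lambda_M = \htop(f)$. Since $f \in \CPM(G)$ forces $\htop(f) > 0$ (Corollary~\ref{c:3}), we have $\lambda_M > 1$, which also guarantees $m_{ii}(n) \geq 1$ for all sufficiently large $n$ (equivalently, by aperiodicity of $M$, $m_{ii}(n) > 0$ eventually). For the finitely many small $n$ where $m_{ii}(n)$ might vanish, we may safely set $s_n := 1$ (a trivial $1$-horseshoe given by the arc $i$ itself under $f^n$), without affecting the limit.

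Putting these ingredients together, take $s_n := \max\{1, m_{ii}(n)\}$. By construction $f^n$ has an $s_n$-horseshoe, and
\[
\lim_{n\to\infty} \tfrac{1}{n}\log s_n \;=\; \lim_{n\to\infty} \tfrac{1}{n}\log m_{ii}(n) \;=\; \log \lambda_M \;=\; \htop(f),
\]
which is the required conclusion.

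Essentially all the genuine work (the Vere-Jones growth formula, the identification of Gurevich entropy with $\htop(f)$ via the isomorphism modulo small sets, and the combinatorial interpretation of $m_{ii}(n)$ as a count of Markov cylinders mapping onto $i$) has already been done. The only point requiring a small argument is the nonvanishing of $m_{ii}(n)$ for large $n$, which is the one mild obstacle, resolved by aperiodicity of $M$ (or, alternatively, sidestepped entirely by the $\max\{1,\cdot\}$ trick above since the limit is unaffected).
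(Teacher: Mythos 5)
Your proposal is correct and takes essentially the same approach as the paper: fix one partition arc, use Proposition~\ref{p:11}(ii) to exhibit $m_{ii}(n)$ subarcs of $\P_n$ with pairwise disjoint interiors each mapped by $f^n$ onto that arc, and combine Proposition~\ref{p:2} with Corollary~\ref{c:entropy} to identify the growth rate with $\htop(f)$. The only divergence is your fallback for the finitely many $n$ with $m_{ii}(n)=0$, where the claim that the arc $i$ itself gives a trivial $1$-horseshoe for $f^n$ is not justified (indeed $m_{ii}(n)=0$ means $f^n(i)\not\supset i$); but this edge case is glossed over in the paper's own proof as well and has no effect on the limit.
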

\begin{proof}
Choose a partition $\P$ and let $M$ be the associated transition matrix. Fix a partition arc $j\in\P$.
We use Proposition~\ref{p:2}, Proposition~\ref{p:11}(ii), and Corollary~\ref{c:entropy}. By those
statements, $\htop(f)=\log (\lim_n [m_{jj}(n)]^{\frac{1}{n}})$ and for each $n$, the arc $j$
contains $m_{jj}(n)$ arcs $j_1,\dots,j_{m_{jj}(n)}$ with
pairwise disjoint interiors such that $f^n(j_i)\supset j_{\ell}$ for all $1\le i,\ell\le
m_{jj}(n)$. Clearly, the map $f^n$ has an $m_{jj}(n)$-horseshoe \cite{Mi79}.
Let $s_n=m_{jj}(n)$ and the proof is finished.
\end{proof}

\begin{corollary}\label{c:2}Similarly as for the interval case \cite{ALM00} one can deduce from Theorem \ref{t:6} the following.
\begin{itemize}
\item[(i)] $\liminf_{n}\frac{1}{n}\log \Var(f^n)\ge\htop(f)$, where $\Var(f)$ denotes the variation of $f$ on $G$,
\item[(ii)] $\htop(f)=\sup\{\htop(f\upharpoonright M)\colon~M\subset G\text{ is minimal}\}$,
\item[(iii)] $\limsup_{n}\frac{1}{n}\log \card\{x\in G\colon~f^n(x)=x\}\ge\htop(f)$.
\end{itemize}
\end{corollary}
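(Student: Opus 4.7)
The plan is to derive all three statements from the horseshoes produced by Theorem \ref{t:6}, imitating the interval-case arguments of \cite{ALM00}. Throughout, recall from the proof of Theorem \ref{t:6} that the $s_n$-horseshoe of $f^n$ is witnessed by a single partition arc $j\in\P$ containing subarcs $j_1,\ldots,j_{s_n}$ with pairwise disjoint interiors such that each restriction $f^n|_{j_i}$ is a homeomorphism with $f^n(j_i)\supset j$.

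For (iii), each $f^n|_{j_i}\maps j_i\to f^n(j_i)$ is a homeomorphism whose image contains $j_i$; a standard intermediate-value-theorem argument (parametrize $j_i$ as $[0,1]$ and apply the IVT to a function measuring signed position along $j_i$) supplies a fixed point of $f^n$ inside $j_i$. Because the interiors of the $j_i$ are pairwise disjoint and any two of these arcs meet in at most one common endpoint, each point of $G$ belongs to at most two of the $j_i$, so at least $s_n/2$ of the fixed points so produced are distinct. Taking logarithms, dividing by $n$, and passing to $\limsup$ yields (iii). For (i), the same homeomorphism data implies $\Var(f^n|_{j_i})\ge \dia(j)$ for each $i$, and additivity of variation over pieces with disjoint interiors yields $\Var(f^n)\ge \Var(f^n|_j)\ge s_n\cdot\dia(j)$; the conclusion follows after taking $\tfrac{1}{n}\log$ and $\liminf$.

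For (ii), the inequality $\htop(f)\ge \sup\{\htop(f|_M):M\text{ minimal}\}$ is immediate from restriction. For the converse, the horseshoe furnishes a compact $f^n$-invariant set $X_n=\bigcap_{k\ge 0}(f^n)^{-k}\bigl(\bigcup_i j_i\bigr)$ which admits an itinerary semiconjugacy onto the full shift $\Sigma_{s_n}$, so $\htop(f^n|_{X_n})\ge \log s_n$. Using the known fact that $\Sigma_{s_n}$ contains minimal subshifts whose entropy approaches $\log s_n$ (for instance via a Toeplitz construction, or Jewett--Krieger applied to the Bernoulli measure of maximal entropy), one pulls back such a subshift to a closed $f^n$-invariant subset of $X_n$ and selects a minimal subset $K_n$ of suitable entropy, obtaining $\htop(f^n|_{K_n})\ge \log s_n-\eps_n$ with $\eps_n\to 0$. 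Then $M_n:=\bigcup_{k=0}^{n-1}f^k(K_n)$ is minimal for $f$ (each $f$-orbit visits every $f^k(K_n)$, and its $f^n$-subsequence is dense there by minimality of $K_n$ under $f^n$ and continuity of $f^k$), and $\htop(f|_{M_n})=\tfrac{1}{n}\htop(f^n|_{K_n})\to\htop(f)$ as $n\to\infty$ by Theorem \ref{t:6}.

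The principal obstacle is the entropy transfer in (ii): producing a minimal subset of $X_n$ whose entropy is close to $\log s_n$ goes beyond the bare semiconjugacy and requires either an appeal to Jewett--Krieger or an explicit symbolic construction of a high-entropy minimal subshift of $\Sigma_{s_n}$ whose preimage in $X_n$ can be controlled in entropy; the factor-of-$n$ rescaling between $\htop(f|_{M_n})$ and $\htop(f^n|_{K_n})$ then demands $\eps_n$ to be controlled uniformly. Parts (i) and (iii), by contrast, reduce to elementary bookkeeping with the horseshoe data.
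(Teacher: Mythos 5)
The paper offers no proof of this corollary beyond the citation of the interval case in \cite{ALM00}, so the benchmark is the standard horseshoe argument, which is the route you take. Your parts (i) and (iii) are correct and essentially the intended ones: the horseshoe arcs $j_1,\dots,j_{s_n}$ all lie in the single partition arc $j$ and each $f^n|_{j_i}$ is a homeomorphism onto $j$, so the intermediate-value fixed-point count (parametrize $j$, not $j_i$, but that is cosmetic) and the variation estimate $\Var(f^n)\ge s_n\cdot\dia(j)$ go through exactly as on the interval, and Theorem \ref{t:6} finishes both.

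The gap is in (ii), at precisely the step you flag and leave open. As written, ``selects a minimal subset $K_n$ of suitable entropy'' is unjustified: a minimal subset of a closed invariant set of large entropy can have entropy zero, and neither Jewett--Krieger nor a Toeplitz construction by itself locates a high-entropy minimal set inside $X_n$ (Jewett--Krieger does not even produce a subshift of $\Sigma_{s_n}$ unless combined with Krieger's generator theorem; the clean reference is Grillenberger's theorem giving minimal subshifts $Y_n\subset\Sigma_{s_n}$ with $h(Y_n)\ge\log s_n-\eps$). The missing observation that closes the argument is elementary: once $Y_n$ is minimal, \emph{any} minimal subset $K_n$ of the preimage of $Y_n$ under the itinerary semiconjugacy maps \emph{onto} $Y_n$ (its image is a nonempty closed invariant subset of a minimal system), so $(K_n,f^n)$ is an extension of $(Y_n,\sigma)$ and $\htop(f^n|_{K_n})\ge h(Y_n)$ with no further entropy control needed; the uniformity in $\eps_n$ you worry about is also a non-issue, since one may fix $\eps$ and use $\eps/n\to0$. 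A secondary point you pass over: because the arcs $j_i$ may share endpoints, the itinerary map on $X_n$ is not single-valued; either shrink to pairwise disjoint subarcs forming a horseshoe with at most two fewer branches (harmless in the limit), or work on the symbolic side with the paper's map $\psi$ from (\ref{e:psi}), pushing a high-entropy minimal subshift forward and recovering its entropy via Theorem \ref{t:iso} and Lemma \ref{l:null} from a nonatomic ergodic measure of nearly full entropy. Your final reduction, that $M_n=\bigcup_{k=0}^{n-1}f^k(K_n)$ is $f$-minimal and $\htop(f|_{M_n})\ge\frac1n\htop(f^n|_{K_n})$, is correct; only the entropy-transfer step above needs to be repaired for the proof of (ii) to be complete.
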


%

\section{Entropy, Hausdorff dimension and Lipschitz constants}\label{s:lipschitz}

Let $(X,d)$ be a nonempty compact metric space with Hausdorff dimension $\HD_d(X)$, and let $f\maps X\to X$ be a Lipschitz continuous map with Lipschitz constant $\Lip_d(f)=\sup_{x\neq y} d(f(x),f(y)) / d(x,y)$. We write $\log^+(x)$ to denote the maximum of $\log(x)$ and $0$. The following inequality is well known \cite{DaZhGe98},\cite{Mi03}.

\begin{proposition}
An upper bound for the topological entropy is given by
$$\htop(f) \leq \HD_d(X) \cdot \log^+\Lip_d(f).$$
\end{proposition}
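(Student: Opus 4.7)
The plan is to use the $(n,\eps)$-spanning characterisation $\htop(f) = \lim_{\eps\to 0^+}\limsup_n\frac{1}{n}\log r_n(\eps)$, where $r_n(\eps)$ is the minimal cardinality of an $\eps$-net in the Bowen metric $d_n(x,y) = \max_{0 \le i < n} d(f^i(x),f^i(y))$. Write $\lambda = \Lip_d(f)$. First I would dispose of the case $\lambda \le 1$: iterating $d(f(x),f(y)) \le \lambda d(x,y) \le d(x,y)$ gives $d_n = d$ for every $n$, so $r_n(\eps) = N_d(\eps)$ is independent of $n$ and $\htop(f) = 0 = \HD_d(X)\cdot\log^+\lambda$, matching the right-hand side.

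For $\lambda > 1$ the decisive elementary step is the observation that the identity $(X,d) \to (X,d_n)$ is $\lambda^{n-1}$-Lipschitz: if $d(x,y) < \eps\lambda^{-(n-1)}$, then for every $0 \le i < n$ one has $d(f^i(x),f^i(y)) \le \lambda^i d(x,y) < \eps$, so $d_n(x,y) < \eps$. Consequently every $\eps\lambda^{-(n-1)}$-net for $(X,d)$ is an $\eps$-net for $(X,d_n)$, which yields the central inequality
\[
r_n(\eps) \;\le\; N_d\bigl(\eps\lambda^{-(n-1)}\bigr).
\]
If the proposition were stated with upper box dimension $\overline{\dim}_B(X)$ in place of $\HD_d(X)$, we would essentially be done: the estimate $N_d(\delta) \lesssim \delta^{-s}$ valid for any $s > \overline{\dim}_B(X)$ would directly give $\limsup_n \frac{1}{n}\log r_n(\eps) \le s\log\lambda$, and letting $s \searrow \overline{\dim}_B(X)$ would finish the argument.

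The genuinely delicate point---and the main obstacle---is that $\HD_d(X)$ can be strictly smaller than $\overline{\dim}_B(X)$, so the covering-number bound above is not sharp enough on its own. To sharpen it to Hausdorff dimension I would route the proof through the variational principle $\htop(f) = \sup_\mu h_\mu(f)$ together with Brin--Katok's local formula
\[
h_\mu(f) \;=\; \lim_{\eps\to 0^+}\limsup_n \Bigl(-\tfrac{1}{n}\log\mu\bigl(B_{d_n}(x,\eps)\bigr)\Bigr),
\]
which holds for $\mu$-a.e.\ $x$ when $\mu$ is an ergodic $f$-invariant Borel probability measure. The inclusion $B_d(x,\eps\lambda^{-(n-1)}) \subseteq B_{d_n}(x,\eps)$ combined with the standard pointwise-dimension bound $\overline{d}_\mu(x) \le \HD_d(X)$ then yields $h_\mu(f) \le \HD_d(X)\cdot\log\lambda$ for every such $\mu$, and the variational principle finishes the proof. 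In my view, this ergodic route is the cleanest way to convert the Hausdorff-dimension hypothesis---which a priori only guarantees vanishing of an outer measure---into an honest quantitative bound on Bowen-ball masses.
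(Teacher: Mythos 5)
The paper itself offers no proof of this proposition (it is quoted as well known, with references to Dai--Zhou--Geng and Misiurewicz), so your argument has to stand on its own. Most of it does: the reduction to $\lambda=\Lip_d(f)>1$, the inclusion $B_d(x,\eps\lambda^{-(n-1)})\subseteq B_{d_n}(x,\eps)$, and the plan of bounding $h_\mu(f)$ for each ergodic invariant $\mu$ and then invoking the variational principle are all sound. The genuine flaw is the final step: the asserted ``standard pointwise-dimension bound'' $\overline{d}_\mu(x)\le \HD_d(X)$ is not a standard fact and is false in general. What the upper pointwise dimension controls is the \emph{packing} dimension: if $\overline{d}_\mu(x)\ge s$ on a set of positive $\mu$-measure, that set has packing dimension at least $s$, and there are compact spaces whose Hausdorff dimension is strictly smaller than their packing dimension, carrying measures with $\overline{d}_\mu(x)>\HD_d(X)$ for $\mu$-a.e.\ (even every) $x$. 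So, as written, your chain of inequalities only delivers $h_\mu(f)\le \dim_P(X)\cdot\log\lambda$, i.e.\ a bound of the same box/packing type you were explicitly trying to improve upon, and the Hausdorff-dimension statement does not follow.

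The repair stays entirely within your framework. Brin--Katok gives, for ergodic $\mu$ and $\mu$-a.e.\ $x$, the value $h_\mu(f)$ for the $\liminf_n$ as well as for the $\limsup_n$; use the $\liminf$ half. With $\delta_n=\eps\lambda^{-(n-1)}$ your inclusion gives $-\frac1n\log\mu(B_{d_n}(x,\eps))\le \frac{\log\mu(B_d(x,\delta_n))}{\log\delta_n}\cdot\frac{-\log\delta_n}{n}$, the second factor tends to $\log\lambda$, and because $(\delta_n)$ decreases geometrically the $\liminf_n$ of the first factor is bounded by the lower pointwise dimension $\underline{d}_\mu(x)$ (comparing an arbitrary radius $r\in[\delta_{n+1},\delta_n]$ with $\delta_n$ costs only a factor $\frac{\vert\log r\vert}{\vert\log r\vert-\log\lambda}\to 1$). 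The fact you then need, which \emph{is} standard (mass distribution/Billingsley lemma), is $\underline{d}_\mu(x)\le\HD_d(X)$ for $\mu$-a.e.\ $x$: if $\underline{d}_\mu\ge s$ on a set of positive $\mu$-measure, that set has Hausdorff dimension at least $s$. This yields $h_\mu(f)\le\HD_d(X)\cdot\log\lambda$ for every ergodic $\mu$, and the variational principle finishes the proof. For comparison, the classical proofs in the cited literature avoid invariant measures altogether: they work with Bowen's dimension-like characterization of topological entropy, converting a cover of $X$ by sets of small $d$-diameter into Bowen balls of depth roughly $\log(\eps/\operatorname{diam})/\log\lambda$; your ergodic route, once corrected from $\overline{d}_\mu$ to $\underline{d}_\mu$, is a legitimate alternative.
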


Replacing the metric on $X$ with another compatible metric can change both the Hausdorff dimension and the Lipschitz constant. Thus, a natural question arises: by varying the metric $d$, can we make the product $\HD_d(X) \cdot \log^+ \Lip_d(f)$ as close to $\htop(f)$ as we like? Our construction of conjugate maps of bounded slope gives us a way to address this question for tame graph maps.

Since by Proposition~\ref{p:2} the value $\lambda_M^{-1}$ is a common radius of convergence of the power series $M_{ij}(z)=\sum_{n\ge 0}m_{ij}(n)z^n$, we immediately obtain for each pair $i,j\in\A$,
\begin{equation*}\label{e:10}M_{ij}(\lambda^{-1})\begin{cases}
\in\bbr,~\lambda>\lambda_M,\\
=\infty,~ \lambda<\lambda_M.
\end{cases}
\end{equation*}

The following result first used in \cite{Pr64} will be useful when proving our theorem.

\begin{proposition}Let $f\in\CPM(G)$ with a partition $\P$, consider its transition matrix $M=(m_{ij})_{i,j\in\P}$. For each $\lambda>\lambda_M$ and $j\in\P$,
\begin{equation}\label{e:17}
\forall~i\in\P\colon~\sum_{k\in\P}m_{ik}M_{kj}(\lambda^{-1})=\lambda M_{ij}(\lambda^{-1})\left (1-\frac{\delta_{ij}}{M_{ij}(\lambda^{-1})}\right )\le \lambda M_{ij}(\lambda^{-1}).
\end{equation}
\begin{proof}See \cite[Theorem 1]{Pr64}. The right inequality follows from the fact that for irreducible $M$, $M_{jj}(\lambda^{-1})>1$ for each $j\in\P$.\end{proof}
\end{proposition}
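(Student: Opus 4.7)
The plan is to recognize this as the standard generating-function consequence of the Chapman--Kolmogorov-style recursion $m_{ij}(n)=\sum_{k\in\P}m_{ik}m_{kj}(n-1)$ for $n\geq 1$, together with the initial condition $m_{ij}(0)=\delta_{ij}$. Starting from the definition $M_{ij}(z)=\sum_{n\ge 0}m_{ij}(n)z^n$, I would split off the $n=0$ term and apply the recursion to get
\[
M_{ij}(z)-\delta_{ij}=\sum_{n\ge 1}m_{ij}(n)z^n = z\sum_{n\ge 0}\sum_{k\in\P}m_{ik}\,m_{kj}(n)\,z^n.
\]
After interchanging the two summations the inner sum collapses to $M_{kj}(z)$, giving the master identity
\[
M_{ij}(z)-\delta_{ij}=z\sum_{k\in\P}m_{ik}M_{kj}(z).
\]
Specializing to $z=\lambda^{-1}$ and multiplying by $\lambda$ yields
\[
\sum_{k\in\P}m_{ik}M_{kj}(\lambda^{-1}) = \lambda\bigl(M_{ij}(\lambda^{-1})-\delta_{ij}\bigr) = \lambda M_{ij}(\lambda^{-1})\left(1-\frac{\delta_{ij}}{M_{ij}(\lambda^{-1})}\right),
\]
which is exactly~(\ref{e:17}).

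The only analytic subtlety, and what I expect to be the main obstacle, is justifying the interchange of $\sum_k$ and $\sum_n$, since $\P$ is countably infinite. This is however routine under our hypothesis $\lambda>\lambda_M$: by Proposition~\ref{p:2} the common radius of convergence of the series $M_{kj}(z)$ is $\lambda_M^{-1}$, so every $M_{kj}(\lambda^{-1})$ is finite, and all summands $m_{ik}m_{kj}(n)\lambda^{-n}$ are nonnegative. Tonelli's theorem therefore permits the interchange, and no additional bound is needed.

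For the inequality it suffices to check that the parenthesized factor lies in $(0,1]$. When $i\ne j$ the factor equals $1$, giving equality. When $i=j$, the factor is $1-1/M_{jj}(\lambda^{-1})$, which is strictly less than $1$ because $M_{jj}(\lambda^{-1})\ge m_{jj}(0)=1$, and in fact strictly greater than $1$ by irreducibility of $M$: there exists $n\ge 1$ with $m_{jj}(n)>0$, contributing a strictly positive term $m_{jj}(n)\lambda^{-n}$ to $M_{jj}(\lambda^{-1})$ on top of the $n=0$ term. Hence the factor is positive and bounded by $1$, which gives the right-hand inequality.
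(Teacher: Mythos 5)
Your derivation is correct, and it takes a different route from the paper: the paper does not prove the identity at all but outsources it to Pruitt \cite[Theorem 1]{Pr64}, adding only the one-line remark that the inequality follows from $M_{jj}(\lambda^{-1})>1$ for irreducible $M$. You instead give a self-contained proof of the resolvent-type identity $M_{ij}(z)-\delta_{ij}=z\sum_{k}m_{ik}M_{kj}(z)$ by splitting off the $n=0$ term, applying the recursion \eqref{e:11}, and invoking Tonelli for the interchange of the two sums (legitimate since all terms are nonnegative, and finiteness of $M_{kj}(\lambda^{-1})$ for $\lambda>\lambda_M$ by Proposition~\ref{p:2} makes the subsequent algebra valid). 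What your approach buys is independence from the external reference and transparency about where $\lambda>\lambda_M$ is used; what the paper's citation buys is brevity and a pointer to the general Vere-Jones/Pruitt framework it is drawing on. For the inequality, your argument coincides in substance with the paper's: when $i=j$ the factor is $1-1/M_{jj}(\lambda^{-1})$ and irreducibility gives $m_{jj}(n)>0$ for some $n\ge 1$, hence $M_{jj}(\lambda^{-1})>1$ and the factor lies in $(0,1)$; when $i\neq j$ the factor is $1$. Two small polish points: your sentence ``strictly less than $1$ \dots and in fact strictly greater than $1$'' conflates the factor with $M_{jj}(\lambda^{-1})$ and should be reworded, and for $i\neq j$ you should note that the displayed quotient $\delta_{ij}/M_{ij}(\lambda^{-1})$ is well defined because irreducibility forces $M_{ij}(\lambda^{-1})>0$. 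Neither affects the correctness of the argument.
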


\begin{proposition}\label{p:sub}Let $f\in\CPM(G)$ with a partition $\P$, consider its transition matrix $M=(m_{ij})_{i,j\in\P}$. For $\lambda>\lambda_M$ and $j\in\P$ put $v=(v_{i}=M_{ij}(\lambda^{-1}))_{i\in\P}$. Then $v$ is a $\lambda$-subeigenvector which is deficient in the coordinate $j$ only. 
\end{proposition}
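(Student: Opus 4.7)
The plan is to read off the proposition directly from the previous proposition (equation~\eqref{e:17}), so the work is almost entirely bookkeeping.

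First I would set $v_i := M_{ij}(\lambda^{-1})$ for a fixed $j \in \P$ and a fixed $\lambda > \lambda_M$. Since $\lambda > \lambda_M$ lies strictly above the common radius of convergence, all entries $M_{ij}(\lambda^{-1})$ are finite, and by irreducibility of $M$ together with $\lambda_M > 0$ each is in fact strictly positive. In particular $v$ is a strictly positive (hence nonzero nonnegative) sequence, which is the basic condition for being a candidate subeigenvector.

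Next I would apply~\eqref{e:17} with the index $i$ varying. That identity states
\[
\sum_{k\in\P} m_{ik} M_{kj}(\lambda^{-1}) = \lambda M_{ij}(\lambda^{-1})\left(1 - \frac{\delta_{ij}}{M_{ij}(\lambda^{-1})}\right).
\]
Rewriting in terms of $v$ gives $\sum_{k\in\P} m_{ik} v_k = \lambda v_i - \lambda \delta_{ij}$. The proof then splits into two cases according to whether $i = j$ or not.

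For $i \neq j$ we have $\delta_{ij}=0$, so the identity reduces to $\sum_{k} m_{ik} v_k = \lambda v_i$, i.e.\ equality in~\eqref{e:2} and no deficiency at the coordinate $i$. For $i = j$ we have $\delta_{jj} = 1$, so $\sum_{k} m_{jk} v_k = \lambda v_j - \lambda$, and since $\lambda > 1 > 0$ this is strictly less than $\lambda v_j$, giving deficiency precisely at the coordinate $j$. Combining the two cases, $v$ satisfies~\eqref{e:2} with equality everywhere except at $i=j$, so it is a $\lambda$-subeigenvector deficient exactly in the single coordinate $j$.

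There is no real obstacle here; the only point one has to be slightly careful about is invoking positivity of $v$ (via irreducibility and $\lambda > \lambda_M$) to ensure that the factor $1 - \delta_{ij}/M_{ij}(\lambda^{-1})$ in~\eqref{e:17} is meaningful and that $v$ qualifies as a nonzero nonnegative subeigenvector in the sense of the definition preceding~\eqref{e:2}.
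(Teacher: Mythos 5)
Your proposal is correct and follows essentially the same route as the paper: both read the subeigenvector property straight off the identity \eqref{e:17}, observing that the Kronecker delta makes it an equality for $i\neq j$ and a strict inequality (deficiency) precisely at $i=j$, with positivity and finiteness of the entries $M_{ij}(\lambda^{-1})$ coming from irreducibility and $\lambda>\lambda_M\ge 1$. Your case-by-case rewriting $\sum_k m_{ik}v_k=\lambda v_i-\lambda\delta_{ij}$ is just a more explicit bookkeeping of the paper's one-line argument, so nothing is missing.
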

\begin{proof}Since the matrix $M$ is irreducible and Proposition \ref{p:11} is true, our definition of $\lambda_M$ gives $1\le \lambda_M<\lambda$. The Kronecker symbol $\delta_{ij}=0$ for $i\neq j$ and $\delta_{jj}=1$, so the inequality in (\ref{e:17}) is in fact an equality except when $i=j$.
\end{proof}

If a map $f\in\CPM(G)$ is leo, then by Remark~\ref{r:5}(ii) every $\lambda$-subeigenvector is summable. So by Proposition \ref{p:sub} for such a map, for each $\lambda>\lambda_M$ we have a summable $\lambda$-subeigenvector which is deficient in exactly one coordinate and Theorem \ref{t:1} applies.

\begin{theorem}\label{t:4} Let $f\in\CPM(G)$ be leo. Then for each $\epsilon>0$ there is a distance function $\varrho$ on $G$ compatible with the topology $\tau$ such that $$ \HD_\varrho(G) \cdot \log^+ \Lip_\varrho(f) < \htop(f) + \epsilon.$$
\end{theorem}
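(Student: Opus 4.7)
The plan is to feed a carefully chosen subeigenvector into the bounded-slope construction from Theorem~\ref{t:broad}(ii) and then read off both the Lipschitz bound and the Hausdorff dimension from the resulting countably affine conjugate. Fix $\epsilon > 0$. By Corollary~\ref{c:entropy} and Remark~\ref{r:1}, $\htop(f) = \log \lambda_M$ with $\lambda_M > 1$, so we may choose $\lambda \in (\lambda_M, \lambda_M e^{\epsilon})$. Fixing any $j \in \P$, Proposition~\ref{p:sub} furnishes the $\lambda$-subeigenvector $v = (M_{ij}(\lambda^{-1}))_{i \in \P}$, deficient only at coordinate $j$. Since $f$ is leo, Remark~\ref{r:5}(ii) makes $v$ summable, so all hypotheses of Theorem~\ref{t:broad}(ii) are in force.

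The construction producing the bounded-slope conjugate also produces, via Theorem~\ref{t:1}(iii), a compatible metric $\varrho$ on $G$ satisfying $\varrho(f(x), f(y)) \leq \lambda \varrho(x, y)$, i.e. $\Lip_\varrho(f) \leq \lambda$. Through Theorem~\ref{t:graph} and Corollary~\ref{c:isometry}, the associated homeomorphism $\phi \maps G \to G'$ is an isometry between $(G, \varrho)$ and $(G', L)$, where $G' \subset \bbr^3$ is a countably affine graph and $L$ is the geodesic-length metric assembled from Euclidean lengths of the line segments.

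The heart of the argument is to show $\HD_\varrho(G) = 1$; via the isometry, this reduces to $\HD_L(G') = 1$. For the upper bound, $G'$ decomposes as the union of the countably many straight line segments $j_i$ of its tame partition together with a countable exceptional set. On each $j_i$, the metric $L$ is dominated by the Euclidean distance (the segment itself is an admissible arc between its points), so $\HD_L(j_i) \leq 1$, and countable stability of Hausdorff dimension gives $\HD_L(G') \leq 1$. The lower bound is automatic since any nondegenerate Peano continuum has topological dimension at least $1$, and Hausdorff dimension dominates topological dimension in every compatible metric. Combining,
$$\HD_\varrho(G) \cdot \log^+ \Lip_\varrho(f) \leq 1 \cdot \log \lambda < \log \lambda_M + \epsilon = \htop(f) + \epsilon.$$

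The main obstacle I anticipate is the upper bound $\HD_L(G') \leq 1$: one must argue cleanly that the geodesic metric $L$ restricted to a single affine piece is majorized by the Euclidean metric there (so that the standard dimension bound for line segments transfers), and then invoke countable stability to patch across the countably many pieces plus the countable residual set. The remaining steps are direct applications of the machinery already developed in Sections~\ref{s:main} and~\ref{s:horseshoes}.
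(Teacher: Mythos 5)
Your proposal is correct and follows essentially the same route as the paper: choose $\lambda\in(\lambda_M,\lambda_M e^{\epsilon})$, take the subeigenvector of Proposition~\ref{p:sub} (summable by the leo property), and read off $\Lip_\varrho(f)\le\lambda$ from Theorem~\ref{t:1}(iii) together with $\HD_L(G')=1$ via the isometry of Corollary~\ref{c:isometry}. The only difference is that you spell out the verification that a countably affine graph has Hausdorff dimension one (countable stability plus the Lipschitz comparison of $L$ with the Euclidean metric on each segment), which the paper simply asserts as a known fact.
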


\begin{proof}
Fix a partition $\P$ and let $M$ be the corresponding transition matrix. In Corollary~\ref{c:entropy} we saw that $\htop(f)=\log \lambda_M$ and from Corollary~\ref{c:3} we know that $\lambda_M >1$. Given $\epsilon>0$ choose $\lambda>\lambda_M$ with $\log\lambda < \htop(f)+\epsilon$. In Proposition~\ref{p:sub} we identified a $\lambda$-subeigenvector $v$ which is deficient in only one coordinate. Summability of $v$ follows from the leo property, see Remark~\ref{r:5}(ii). Now applying Theorem~\ref{t:1}~(iii), Corollary~\ref{c:isometry}, and the fact that $\HD_L(G')$ of a countably affine graph is $1$, for the $\tau$-compatible metric $\varrho$ on $G$ we obtain
\[
	\HD_{\rho}(G)\cdot \log^+ \Lip_\varrho(f) = \HD_{L}(G')\cdot \log^+ \Lip_L(g)=\log\lambda. \qedhere
\]
\end{proof}

\begin{question}
Does Theorem~\ref{t:4} apply also to non-leo maps $f\in\CPM(G)$?
\end{question}

This seems to be a difficult question. Even in the case $G=[0,1]$ we do not know the answer. This is a different issue than the infimum of Lipschitz constants addressed in~\cite{BoRo17}.

%
%
%
%
%

\linespread{1}\selectfont 

\end{document}